\documentclass[11pt]{amsart}
\usepackage{latexsym,amsmath,amssymb,amsthm,mathrsfs,amscd,enumerate,latexsym,array}
\usepackage[all]{xy}

\setlength{\topmargin}{0mm}
\setlength{\oddsidemargin}{8mm}
\setlength{\evensidemargin}{8mm}
\setlength{\textwidth}{140mm}
\setlength{\textheight}{220mm}

\theoremstyle{definition}
\newtheorem{Unity}{Unity}[section]
\newtheorem{dfn}[Unity]{Definition}
\newtheorem{rmk}[Unity]{Remark}
\newtheorem{ntt}[Unity]{Notation}
\newtheorem{exam}[Unity]{Example}

\theoremstyle{plain}
\newtheorem{thm}[Unity]{Theorem}
\newtheorem{prop}[Unity]{Proposition}
\newtheorem{conj}[Unity]{Conjecture}
\newtheorem{lem}[Unity]{Lemma}

\newtheorem{claim}[Unity]{Claim}

\begin{document}

\title[Fano manifolds whose Chern characters satisfy some positivity conditions]{Fano manifolds whose Chern characters satisfy some positivity conditions}
\author{Taku Suzuki}
\keywords{Fano manifolds, rational curves, Chern characters}
\subjclass[2020]{Primary 14J45, Secondary 14C17, 14M20.}
\address{Cooperative Faculty of Education, Utsunomiya University, 350 Mine-machi, Utsunomiya, Tochigi, 321-8505, Japan}
\email{taku.suzuki@cc.utsunomiya-u.ac.jp}
\thanks{The author is supported by JSPS KAKWNHI Grant Number 21K13764.}

\maketitle

\begin{abstract}
In this paper, we investigate Fano manifolds whose Chern characters satisfy some positivity conditions.
We prove that such manifolds admit long chains of higher order minimal families of rational curves and are covered by higher rational varieties.
\end{abstract}

\section{Introduction}

Fano manifolds, namely, smooth complex projective varieties having (positive dimension and) positive first Chern class, are fundamental subjects in algebraic geometry.
In \cite{Mo}, Mori showed that any Fano manifold is covered by rational curves.
Since then, rational curves have played an important role in the study of Fano manifolds.

In this paper, we discuss Fano manifolds whose Chern characters satisfy some positivity conditions.
Such manifolds have been investigated by many authors and are expected to have stronger versions of several properties of Fano manifolds.

In \cite{JS}, de Jong and Starr investigated Fano manifolds with nef second Chern character.
They showed that such manifolds are covered by rational surfaces under the assumption that the pseudo-index is at least $3$, where the pseudo-index is defined as the minimal anticanonical degree of rational curves.

In \cite{AC1}, Araujo and Castravet  investigated the Chen characters of minimal families of rational curves.
For a Fano manifold $X$, a proper irreducible component $H$ of the scheme parametrizing rational curves on $X$ through a fixed general point is called a \textit{minimal family of rational curves}. 
They gave a formula for the Chern characters of $H$ in terms of the Chern characters of $X$.
As a consequence, they proved that Fano manifolds having positive second Chern character and nef third Chern character are covered by rational varieties of dimension $3$ under some extra assumptions.

In \cite{AC2}, Araujo and Castravet classified Fano manifolds of large index having positive second Chern character, where the index is defined as the largest integer dividing the anticanonical divisor.
Furthermore, in \cite{ABCJMMTV}, Araujo, Beheshti, Castravet, Jabbusch, Makarova, Mazzon, Taylor, and Viswanathan classified rational homogeneous spaces of Picard number $1$ having positive second Chern character and Fano manifolds of large index having positive second and third Chern characters.
Their results show that positivity conditions of Chern characters are restrictive, and they proposed the following conjecture:

\begin{conj}[{\cite[Question 1.2]{AC1} and \cite[Conjecture 1.7]{ABCJMMTV}}]\label{Conj1}
Let $X$ be a Fano manifold of dimension $n$.
If ${\rm ch}_k(X)$ is positive for every $k \le n$, then $X$ is isomorphic to $\mathbb{P}^n$.
More strongly, if ${\rm ch}_k(X)$ is positive for every $k \le \lceil {\rm log}_2 (n+2) \rceil$, then $X$ is isomorphic to $\mathbb{P}^n$.
\end{conj}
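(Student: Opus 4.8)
The plan is to reduce Conjecture~\ref{Conj1} to a lower bound on the pseudo-index $i_X$. It is instructive to see where the number $\lceil\log_2(n+2)\rceil$ comes from. For a smooth quadric $Q^n$ the normal bundle sequence gives ${\rm ch}_k(Q^n)=\frac{(n+2)-2^k}{k!}H^k$, so ${\rm ch}_k(Q^n)$ is positive precisely for $k\le\lceil\log_2(n+2)\rceil-1$ and is not positive for $k=\lceil\log_2(n+2)\rceil$; likewise, among smooth hypersurfaces of $\mathbb{P}^{n+1}$ only $\mathbb{P}^n$ satisfies ${\rm ch}_k>0$ for all $k\le\lceil\log_2(n+2)\rceil$. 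Thus the quadric is the sharp extremal example, and once the positivity hypothesis is known to force $i_X\ge n$ we are done: by the theorem of Cho--Miyaoka--Shepherd-Barron and Kebekus a Fano $n$-fold with $i_X\ge n$ is $\mathbb{P}^n$ or $Q^n$, and since $2^{\lceil\log_2(n+2)\rceil}\ge n+2$ the quadric is excluded by the $k=\lceil\log_2(n+2)\rceil$ hypothesis. (Because $\lceil\log_2(n+2)\rceil\le n$ for $n\ge 2$, this argument also contains the first, weaker assertion of the conjecture.)

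To obtain the pseudo-index bound — in fact to prove $X\cong\mathbb{P}^n$ directly — I would induct on $n$, using the full strength of the conjecture in dimensions $<n$. Set $m=\lceil\log_2(n+2)\rceil$ and assume ${\rm ch}_k(X)>0$ for all $k\le m$. Setting aside the cases $i_X\ge n$ (handled above) and the small values of $i_X$ (treated separately), choose a general point $x$ and let $H=H_x$ be a minimal family of rational curves through $x$; it is a smooth Fano manifold of dimension $i_X-2$. The formula of Araujo--Castravet in \cite{AC1} expresses the Chern characters of $H$ in terms of those of $X$, and the point I would extract from it is that it propagates strict positivity with a shift of one: ${\rm ch}_k(X)>0$ for $k\le m$ implies ${\rm ch}_k(H)>0$ for $k\le m-1$. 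Iterating this construction yields the chains of higher order minimal families of the title, and as long as $i_X\le 2^{m-1}$ one has $m-1\ge\lceil\log_2((i_X-2)+2)\rceil$, so the inductive hypothesis applies to $H$ and gives $H\cong\mathbb{P}^{i_X-2}$. A Fano manifold of Picard number one whose general minimal family is a projective space has linear variety of minimal rational tangents, and a Cartan--Fubini type recognition theorem then forces it to be a projective space, hence $X\cong\mathbb{P}^n$.

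This leaves the window $2^{m-1}<i_X\le n-1$, which, since $2^{m-1}\ge(n+2)/2$, lies between roughly $n/2$ and $n$. Here Wi\'{s}niewski's inequality gives $\rho(X)=1$, but the range is too wide to be covered by a classification (the Fujita--Mukai lists reach only coindex $\le 3$), and this is the main obstacle. Some additional input is needed: for instance a quantitative statement to the effect that positivity of ${\rm ch}_2,\dots,{\rm ch}_m$ forces the higher rational subvarieties covering $X$ to have dimension close to $n$ — which is exactly the kind of conclusion that the chain and covering results of this paper provide, and a suitable sharpening of them would presumably close the gap.

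Beyond this, several technical points have to be made precise: that a general minimal family is smooth; that the Araujo--Castravet formula genuinely propagates \emph{strict} positivity with only a unit shift, the correction terms (built from lower Chern characters) being controlled; that the reduction to Picard number one is legitimate under ${\rm ch}_2(X)>0$; the exact form of the recognition theorem recovering $X\cong\mathbb{P}^n$ from a projective-space minimal family; and the base cases of small pseudo-index, where the variety of minimal rational tangents is empty, a point, or a curve and a direct argument is required.
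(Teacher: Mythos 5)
The statement you are trying to prove is stated in the paper as Conjecture~\ref{Conj1}: it is an open problem (attributed to Araujo--Castravet and to Araujo et al.), and the paper offers no proof of it. Its actual results (Theorems~\ref{Thm2} and~\ref{Thm3}) prove much weaker conclusions (existence of long chains of minimal families and covering by rational varieties) under much stronger, quantitative hypotheses. So there is no proof in the paper to compare against, and your text should be judged as a standalone attempt. As such it is not a proof: you yourself concede the case $2^{m-1}<i_X\le n-1$ as an unresolved ``main obstacle,'' and that window is not a peripheral case --- for most $n$ it contains almost the entire range of possible pseudo-indices, since $2^{m-1}$ with $m=\lceil\log_2(n+2)\rceil$ can be as small as roughly $(n+2)/2$.

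The more serious problem is that the step you rely on even outside that window is wrong as stated. You claim the Araujo--Castravet formula ``propagates strict positivity with a shift of one,'' i.e.\ ${\rm ch}_k(X)>0$ for $k\le m$ implies ${\rm ch}_k(H)>0$ for $k\le m-1$. Looking at Proposition~\ref{Prop2} of the paper, the formula for ${\rm ch}_j(H_1)$ begins with the strictly \emph{negative} term $-\frac{1}{j!}c_1(L_1)^j$, and the coefficients $b_{(1,j,k)}=\frac{(-1)^{j+1-k}B_{j+1-k}}{(j+1-k)!}$ involve Bernoulli numbers, which vanish or change sign (e.g.\ $B_4=-\frac{1}{30}$). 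Mere positivity of the ${\rm ch}_k(X)$ therefore does not transfer to $H$; one needs the contributions from $T^k({\rm ch}_k(X))$ to be large enough to dominate the negative terms, which is exactly why the paper's Theorems~\ref{Thm4} and~\ref{Thm5} assume quantitative bounds such as ${\rm ch}_k(X)\ge\frac{m+1}{k!}$ rather than ${\rm ch}_k(X)>0$, and why even \cite[Theorem 5.1]{Su2} needs a dimension hypothesis on $H_1$. Without repairing this, the induction on $n$ via $H$ does not get started. A further gap: concluding $X\cong\mathbb{P}^n$ from $H\cong\mathbb{P}^{i_X-2}$ requires the polarized isomorphism $(H,L)\cong(\mathbb{P}^{i_X-2},\mathscr{O}(1))$ (as in Lemma~\ref{Lem2}(2)), i.e.\ linearity of the variety of minimal rational tangents, not just the abstract isomorphism type of $H$; an inductive hypothesis of the form ``$H$ is a projective space'' does not supply this.
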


In \cite{Su2}, the author introduced \textit{higher order minimal families of rational curves}.
For a Fano manifold $X$, if a minimal family $H_1$ of rational curves on $X$ is also a Fano manifold, then we can take a minimal family $H_2$ of rational curves on $H_1$, which we call a \textit{second order minimal family}.
Similarly we define an \textit{$i$-th order minimal family} $H_i$.
In addition, we define $\underline{N}_X$ (resp.\ $\overline{N}_X$) as the smallest (resp.\ largest) number $i$ such that $X$ admits an $i$-th order minimal family $H_i$ which is not a Fano manifold.
We note that $1 \le \underline{N}_X \le \overline{N}_X \le {\rm dim}X$ for any Fano manifold $X$.
The author and Nagaoka computed the Chen characters of $H_i$ and obtained the following theorem:

\begin{thm}[{\cite[Theorem 5.1]{Su2} and \cite[Theorem 5.1]{Na}}]\label{Thm1}
Let $X$ be a Fano manifold and $m$ a positive integer.
Assume that ${\rm ch}_k(X)$ is nef for every $2 \le k \le m$.
Assume also that $X$ admits a minimal family $H_1$ of rational curves having dimension at least $m^2-m-1$.
Then the following statements hold:
\begin{enumerate}
\setlength{\itemsep}{3pt}
\item $\overline{N}_X \ge m$.
\item $X$ is covered by rational varieties of dimension $m$, except possibly if either $H_1$ is isomorphic to a projective space, a quadric hypersurface, or $X$ admits an $i$-th order minimal family $H_i$ which is isomorphic to a projective space for some $2 \le i \le m-1$.
\end{enumerate}
\end{thm}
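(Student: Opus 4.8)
The plan is to iterate the Chern-character formula of Araujo--Castravet \cite{AC1}, as refined by the author \cite{Su2} and by Nagaoka \cite{Na}, up the tower of minimal families $X = H_0, H_1, H_2, \dots$. For a Fano manifold $Y$ and a minimal family $H$ of rational curves on $Y$ through a general point $y$, this formula expresses ${\rm ch}_k(H)$ in terms of ${\rm ch}_1(Y), \dots, {\rm ch}_k(Y)$ together with $\dim H$ and the normal-bundle data of a minimal curve, via the tangent map from $H$ to the variety of minimal rational tangents $\mathcal{C}_y \subseteq \mathbb{P}(T_y Y)$. The key consequence I want to isolate is a propagation statement: if $Y$ is a Fano manifold, if ${\rm ch}_k(Y)$ is nef for every $2 \le k \le m$, and if $\dim H$ is large enough relative to $m$, then $H$ is again a Fano manifold and ${\rm ch}_k(H)$ is nef for every $2 \le k \le m$, with the proviso that this may fail when $H$ is a projective space or a quadric hypersurface, where the formula degenerates. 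Granting such a statement, (1) follows by induction up the tower, and (2) by an explicit construction of rational subvarieties of $X$ out of the resulting tower of universal families.

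For (1), suppose inductively that $H_1, \dots, H_{i-1}$ are Fano manifolds with ${\rm ch}_k$ nef for $2 \le k \le m$. Applying the propagation statement at the $i$-th step requires $\dim H_i$ to lie above a threshold depending on $m$; here the hypothesis $\dim H_1 \ge m^2 - m - 1$ enters, being precisely calibrated --- through the way the Chern-character formula bounds how fast $\dim H_i$ can drop once the lower Chern characters are nef --- so that $\dim H_i$ stays above this threshold all the way to level $m-1$ and $H_{m-1}$ remains positive-dimensional. If the induction is interrupted because some $H_i$ is a projective space or a quadric hypersurface, that $H_i$ is still a Fano manifold whose iterated minimal families are again projective spaces, respectively quadrics, of strictly decreasing dimension, so the chain $H_1, H_2, \dots$ extends to level $m$ in every case. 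Since $\dim H_j$ strictly decreases, the chain must eventually reach a member that is not a Fano manifold, and since $H_1, \dots, H_{m-1}$ are Fano this happens only at a level $\ge m$; hence $\overline{N}_X \ge m$.

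For (2), assume we are outside the exceptional list, so that the induction of (1) produces Fano manifolds $H_1, \dots, H_{m-1}$; as $H_{m-1}$ is Fano it carries a minimal family $H_m$. For $i = 1, \dots, m$ let $p_i \colon U_i \to H_i$ be the universal family, a $\mathbb{P}^1$-bundle carrying the marked-point section, and $\pi_i \colon U_i \to H_{i-1}$ the evaluation map, which for a minimal family through a general point is birational onto its image. Starting from a general point $z \in H_m$, put $S_m = \{z\}$ and define recursively $S_{i-1} = \pi_i(p_i^{-1}(S_i)) \subseteq H_{i-1}$ for $i = m, m-1, \dots, 1$. Each $p_i^{-1}(S_i)$ is a $\mathbb{P}^1$-bundle over $S_i$, hence rational whenever $S_i$ is, and for general choices $\pi_i$ restricts to a birational morphism $p_i^{-1}(S_i) \to S_{i-1}$ because $\pi_i$ is an isomorphism over a dense open subset of its image; thus $S_{i-1}$ is a rational variety of dimension $\dim S_i + 1$, and $S_0 \subseteq X$ is a rational variety of dimension $m$. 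These cover $X$: for a general $x_0 \in X$, carry out the construction with base point $x_0$; since every curve parametrized by $H_1$ passes through $x_0$, the marked-point section of $U_1$, which maps isomorphically onto $H_1$, lies over $x_0$ and so meets $p_1^{-1}(S_1)$, whence $x_0 \in S_0$. The excluded cases are exactly those in which the Chern-character formula degenerates: a quadric hypersurface of dimension below $2^m - 2$ fails to have ${\rm ch}_m$ nef, so a quadric $H_1$ undermines the positivity used along the tower; a Fano manifold whose variety of minimal rational tangents is a quadric hypersurface is itself a quadric hypersurface, so a quadric cannot first occur among $H_2, \dots, H_{m-1}$, which is why only projective spaces are listed there; and projective spaces are set aside as the remaining degenerate case in which the formula yields no control.

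The main obstacle is establishing the propagation statement: one must push the Araujo--Castravet/Su/Nagaoka formula far enough to carry nef-ness of ${\rm ch}_k$ all the way up to $k = m$, to extract from it the sharp bound $m^2 - m - 1$ after iterating $m-1$ times, and to identify precisely that the only obstructions are projective spaces and quadrics --- this last point being what produces the exceptional list in (2). A secondary technical matter is the genericity of minimal rational curves needed in (2): this is what makes the evaluation maps $\pi_i$ birational onto their images, hence the subvarieties $S_{i-1}$ genuinely rational of the expected dimension rather than merely unirational.
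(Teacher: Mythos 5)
Note first that this paper does not reprove the statement: Theorem \ref{Thm1} is quoted from \cite[Theorem 5.1]{Su2} and \cite[Theorem 5.1]{Na}, and the method is the one reproduced in Section 4 for Theorems \ref{Thm4} and \ref{Thm5}. Your skeleton --- iterate the Araujo--Castravet formula up the tower $X\vdash H_1\vdash H_2\vdash\cdots$ and sweep out rational $m$-folds via the universal families --- is indeed that method. But there is a genuine gap: the ``propagation statement'' you isolate \emph{is} the theorem, and you leave it unproved. Concretely: (a) the inductive invariant is not nefness of ${\rm ch}_k(H_i)$ but the numerical condition $a_2=\cdots=a_i=1$ on the $L_{i-1}$-degrees of the successive minimal curves; without it the formula of Proposition \ref{Prop2} for ${\rm ch}_j(H_i)$ does not even apply, and this condition appears nowhere in your argument. (b) The exceptional cases in (2) are not caused by quadrics failing to have ${\rm ch}_m$ nef, nor by a VMRT-recognition theorem (which you assert without proof and which requires hypotheses not available here); they arise exactly when one fails to prove $a_{i+1}=1$. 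In that event the dimension identity ${\rm dim}\,H_{i+1}=\tfrac{{\rm dim}\,H_i}{2}a_{i+1}+T^2({\rm ch}_2(H_{i-1}))-2$ together with ${\rm dim}\,H_{i+1}<{\rm dim}\,H_i$ forces the minimal anticanonical degree of $H_i$ to be at least ${\rm dim}\,H_i$ (resp.\ ${\rm dim}\,H_i+1$ when $i\ge 2$), and the characterizations of $\mathbb{P}^n$ and $Q^n$ \`a la Cho--Miyaoka--Shepherd-Barron produce precisely the listed exceptions, including why a quadric can only occur at level $1$. (c) The threshold $m^2-m-1$ must be extracted from the explicit coefficients $b_{(i,j,k)}$ (Lemmas \ref{Lem3}, \ref{Lem4} and Proposition \ref{Prop3}) by tracking how fast ${\rm dim}\,H_i$ can drop; you never perform this computation, so nothing in your write-up certifies that the chain survives to level $m$, nor that it survives when some $H_i$ is a low-dimensional projective space or quadric.

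A secondary but real defect is in your construction for (2): the evaluation map of a minimal family is generically \emph{finite} onto its image, not generically injective, so your claim that $\pi_i$ restricts to a birational morphism $p_i^{-1}(S_i)\to S_{i-1}$ is unjustified and in general false; as written you only obtain unirational subvarieties. The correct route is Lemma \ref{Lem2}(1), i.e.\ \cite[II.2.12 and V.3.7.5]{Ko}, which also requires placing $Z$ inside the locus $H^{\circ}$ of curves smooth at the base point --- an issue (handled in the paper via $a_i=1\Rightarrow H_i^{\circ}=H_i$ and Lemma \ref{Lem2}(3)) that your proposal does not address.
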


\begin{rmk}
In Theorem \ref{Thm1}(1), $\underline{N}_X \ge m$ also holds under the stronger assumption that every minimal family has dimension at least $m^2-m-1$.
In Theorem \ref{Thm1}(2), moreover if $H_1$ parametrizes rational curves of degree $1$ with respect to some ample line bundle on $X$, then $X$ is covered by projective spaces of dimension $m$.
\end{rmk}

The aim of this paper is to remove the extra assumption and the exception from Theorem \ref{Thm1} by replacing the assumption for the Chern characters of $X$.
The main theorems of this paper are as follows (see Definition \ref{Def1} for positivity condions):

\begin{thm}[{Theorem \ref{Thm4}}]\label{Thm2} 
Let $X$ be a Fano manifold and $m$ a positive integer.
Assume that
$${\rm ch}_{k}(X) \ge \frac{m+1}{k!}$$
for every $1 \le k \le m$.
Then the following statements hold:
\begin{enumerate}
\setlength{\itemsep}{3pt}
\item $\underline{N}_X \ge m$.
\item $X$ is covered by rational varieties of dimension $m$.
Moreover, if $X$ is covered by rational curves of degree $1$ with respect to some ample line bundle on $X$, then $X$ is covered by projective spaces of dimension $m$.
\end{enumerate}
\end{thm}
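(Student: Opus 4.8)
The plan is to run an iteration built around one operation — passing from a Fano manifold to a minimal family of rational curves on it — while keeping track of the positivity of the Chern characters. The case $m=1$ is trivial (it asserts only $\underline{N}_X\ge 1$ and that $X$ is covered by rational curves, by \cite{Mo}, a curve of $L$-degree $1$ being $\cong\mathbb{P}^1$), so assume $m\ge 2$. The heart of the matter is the following \emph{propagation lemma}: \emph{if $X$ is a Fano manifold with ${\rm ch}_k(X)\ge\frac{m+1}{k!}$ for all $1\le k\le m$, then every minimal family $H_1$ of rational curves on $X$ is a Fano manifold with ${\rm ch}_k(H_1)\ge\frac{m}{k!}$ for all $1\le k\le m-1$.} To prove this I would fix a general point $x\in X$, take the minimal family $H_1=H_x$ through it — which is smooth by a theorem of Kebekus, with universal family a $\mathbb{P}^1$-bundle $p\colon\mathcal U\to H_1$, evaluation $q\colon\mathcal U\to X$, and general member a standard rational curve — and substitute the hypotheses into Araujo--Castravet's formula (\cite{AC1}, in the form used in \cite{Su2,Na}) expressing $k!\,{\rm ch}_k(H_1)$ as the $p$-pushforward of an expression in the classes $q^*{\rm ch}_j(X)$ with $j\le k+1\le m$, the relative tautological class of $p$, and the class of the variety of minimal rational tangents. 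Since ${\rm ch}_j(X)\ge\frac{m+1}{j!}>0$ for every $j\le m$, the lower-order and a priori negative contributions are dominated, and the estimate should collapse to exactly ${\rm ch}_k(H_1)\ge\frac{m}{k!}$; in particular $-K_{H_1}\cdot C\ge m\ge 2$ for every curve $C\subset H_1$. That $H_1$ is Fano then follows from ${\rm ch}_2(X)>0$ together with the bound $m+1\ge 3$ on the pseudo-index of $X$, by the results on minimal families in \cite{AC1,Su2}.

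Granting the propagation lemma, iterating it shows that every $i$-th order minimal family $H_i$ with $0\le i\le m-1$ (writing $H_0=X$) exists, is a Fano manifold, and satisfies ${\rm ch}_k(H_i)\ge\frac{m-i+1}{k!}$ for $1\le k\le m-i$. Hence every $i$-th order minimal family with $i\le m-1$ is a Fano manifold, which is precisely the assertion $\underline{N}_X\ge m$ of (1). Combining ${\rm ch}_1(H_{i-1})\ge m-i+2$ with the inequality $\dim H_i\ge(-K_{H_{i-1}}\cdot C_i)-2$ for a minimal curve $C_i\subset H_{i-1}$ gives $\dim H_i\ge m-i\ge 1$ for $i\le m-1$; in particular $H_{m-1}$ is a positive-dimensional Fano manifold and admits a minimal family $H_m$, yielding a chain $H_1,\dots,H_m$.

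For (2), fix a general point $x\in X$. Since $H_0,H_1,\dots,H_{m-1}$ are Fano, hence rationally connected, choose inductively and generically minimal rational curves $C_1\ni x$ on $X=H_0$, then $C_2\ni[C_1]$ on $H_1$, \dots, and finally $C_m\ni[C_{m-1}]$ on $H_{m-1}$. "Unrolling" the point $[C_m]\in H_m$ down the tower $H_m\to H_{m-1}\to\cdots\to H_0=X$ produces a subvariety $V_m\subset X$: the image in $X$ of an iterated $\mathbb{P}^1$-bundle over $C_m\cong\mathbb{P}^1$ of dimension $m$. It contains $x$ by the incidences $[C_i]\in C_{i+1}$, and the standard general-position analysis of chains of minimal rational curves (as in \cite{Su2,Na}) shows that the natural morphism onto $V_m$ is birational and $\dim V_m=m$, so $V_m$ is a rational variety of dimension $m$ through $x$; as $x$ varies, $X$ is covered by such $V_m$, the exceptional cases of Theorem \ref{Thm1} not arising because we proved directly that all of $H_1,\dots,H_{m-1}$ are Fano. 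If moreover $X$ is covered by rational curves of $L$-degree $1$ for some ample $L$, repeat the construction using at each level the unsplit covering family of $L$-degree-$1$ rational curves (with respect to $L$ on $X$, and with respect to the induced polarizations on the successive $H_i$) in place of the minimal family; the Chern character formula and the preceding arguments still apply in this setting, the degree-$1$ property is inherited down the tower, and the cone over a degree-$1$ curve in the variety of minimal rational tangents is a linear subspace, so each $V_m$ obtained is a projective space $\mathbb{P}^m$ — this is the degree-$1$ case of the construction, as in the remark following Theorem \ref{Thm1}. Hence $X$ is covered by $m$-dimensional projective spaces.

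The step I expect to be the real obstacle is the propagation lemma. Pushing Araujo--Castravet's formula through quantitatively requires controlling the pushforward terms supported on the variety of minimal rational tangents and, when the locus swept by minimal curves through a general $x$ is a proper subvariety of $X$, keeping careful track of which subvarieties of $X$ the Chern characters get integrated over; above all, one must verify that the precise constant $\frac{m+1}{k!}$ — rather than mere nefness, which \cite[Theorem 5.1]{Su2} has to supplement with a hypothesis on $\dim H_1$ — is exactly what makes the bound reproduce itself as $\frac{m}{k!}$ one level down, so that the iteration closes with no extra hypothesis and no exceptional cases.
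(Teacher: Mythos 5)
Your overall architecture (iterate the passage to minimal families, control Chern characters via the Araujo--Castravet formula) is the right one, but the ``propagation lemma'' on which everything rests is a genuine gap, and I do not believe it can be proved by the method you indicate. You want ${\rm ch}_k(X)\ge\frac{m+1}{k!}$ for $k\le m$ to imply ${\rm ch}_j(H_1)\ge\frac{m}{j!}$ for \emph{all} $j\le m-1$, by substituting lower bounds termwise into the formula for ${\rm ch}_j(H_1)$. But the coefficients in that formula are $b_{(1,j,k)}=\frac{(-1)^{j+1-k}B_{j+1-k}}{(j+1-k)!}$, and these are \emph{not} all non-negative: already $b_{(1,j,j-3)}=\frac{B_4}{4!}=-\frac{1}{720}$ for $j\ge4$. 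Since the hypotheses supply only \emph{lower} bounds on the ${\rm ch}_k(X)$, the negative terms cannot be ``dominated'' --- you would need upper bounds you do not have. The paper avoids this entirely: it never propagates the full list of inequalities to the $H_i$. It only ever needs $c_1(H_i)$ and $T^2\bigl({\rm ch}_2(H_{i-1})\bigr)$, i.e.\ the cases $j=1,2$, for which Proposition \ref{Prop3} shows all coefficients $b_{(i,1,k)},b_{(i,2,k)}$ are positive and evaluates $\sum_k b_{(i,1,k)}/k!=1$ and $\sum_k b_{(i,2,k)}/k!=\tfrac12$ exactly; and it expresses these two quantities directly in terms of the Chern characters of $X$ via the composite maps $T^k$, rather than in terms of the (unknown) Chern characters of the intermediate $H_j$.

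A second, related omission: the formula of Proposition \ref{Prop2} for ${\rm ch}_j(H_i)$ is only valid under the hypothesis $a_2=\cdots=a_i=1$, i.e.\ that each higher order family parametrizes curves of $L_{i-1}$-degree $1$. Establishing $a_{i+1}=1$ at every step is half of the paper's induction (part (ii) of Claims \ref{Claim1} and \ref{Claim2}), proved by combining $\dim H_{i+1}=\frac{\dim H_i}{2}a_{i+1}+T^2\bigl({\rm ch}_2(H_{i-1})\bigr)-2$ with $T^2\bigl({\rm ch}_2(H_{i-1})\bigr)\ge\frac32$ and $\dim H_{i+1}<\dim H_i$. Your iteration never addresses this, yet without it the Chern character formula at the next level acquires factors of $a_{i+1}$ and your estimates break; moreover the conditions $a_i=1$ are exactly what Proposition \ref{Prop1} needs (via $H_i^{\circ}=H_i$ and Lemma \ref{Lem2}) to produce the covering rational varieties and projective spaces in part (2), so your ``unrolling'' construction also depends on them. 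You should replace the propagation lemma by the two statements the paper actually proves --- $H_i$ is Fano, and $a_{i+1}=1$ --- each estimated directly from ${\rm ch}_k(X)$.
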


\begin{thm}[{Theorem \ref{Thm5}}]\label{Thm3}
Let $X$ be a Fano manifold and $m$ a positive integer.
Assume that $X$ is covered by rational curves of degree $1$ with respect to some ample line bundle $L$ on $X$ and
$${\rm ch}_{k}(X) \ge \frac{2m+1-2^k}{k!} c_1(L)^k$$
for every $1 \le k \le m$.
Then the following statements hold:
\begin{enumerate}
\setlength{\itemsep}{3pt}
\item $\overline{N}_X \ge m$.
Moreover, if every minimal family parametrizes rational curves of degree $1$ with respect to $L$, then $\underline{N}_X \ge m$.
\item $X$ is covered by rational varieties of dimension $m$ and by projective spaces of dimension $m-1$.
Moreover, under the stronger assumption that
$${\rm ch}_{k}(X) \ge \frac{2m+2-2^k}{k!} c_1(L)^k$$
for every $1 \le k \le m$, $X$ is covered by projective spaces of dimension $m$.
\end{enumerate}
\end{thm}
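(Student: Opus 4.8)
The plan is to argue by induction on $m$. The base case $m=1$ is immediate, since all assertions then reduce to the standing hypotheses. For the inductive step I would fix a general point $x$, choose a minimal family $H_1$ of $X$ parametrizing rational curves of $L$-degree $1$ (one exists by assumption), and pass to the pair $(H_1,m-1)$, where $H_1$ is polarized by the natural ample line bundle $L_1$, the restriction of $\mathcal{O}(1)$ under the tangent map $H_1\hookrightarrow\mathbb{P}(T_xX)$. The engine is the Chern character formula of Araujo--Castravet \cite{AC1}, extended to higher order minimal families by the author \cite{Su2} and by Nagaoka \cite{Na}: for a $k$-dimensional subvariety $Z\subseteq H_1$ it expresses $\mathrm{ch}_k(H_1)\cdot Z$ through the intersection numbers $\mathrm{ch}_j(X)\cdot\widetilde{Z}$ with $j\le k+1$, where $\widetilde{Z}\subseteq X$ is the $(k+1)$-dimensional locus swept out by the curves parametrized by $Z$, together with powers of $c_1(L)$.

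So assume the hypotheses for $X$ with $m\ge 2$. From $\mathrm{ch}_1(X)=-K_X\ge(2m-1)c_1(L)$ and $L\cdot C=1$ one gets $-K_X\cdot C=\dim H_1+2\ge 2m-1$, hence $\dim H_1\ge 2m-3\ge 1$. Two things must then be checked. First, that $H_1$ is again covered by rational curves of $L_1$-degree $1$: since the curves parametrized by $H_1$ are lines in the embedding defined by $L$, the minimal rational curves of $H_1$ are lines of $\mathbb{P}(T_xX)$, and the relevant analysis of the second-order structure of families of lines is the one developed in \cite{Su2} (it underlies the last sentence of the Remark following Theorem \ref{Thm1}). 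Second --- and this is the technical core --- that $H_1$ inherits the numerical hypothesis in the form $\mathrm{ch}_k(H_1)\ge\frac{2(m-1)+1-2^k}{k!}c_1(L_1)^k$ for all $1\le k\le m-1$, and likewise with $2(m-1)+2-2^k$ under the stronger hypothesis. This is proved by feeding, for each such $k$ and each $k$-dimensional $Z\subseteq H_1$, the lower bounds on $\mathrm{ch}_1(X),\dots,\mathrm{ch}_m(X)$ into the Chern character formula; the constants $2m+1-2^j$ (resp.\ $2m+2-2^j$) are designed precisely so that, after the ``$\mathcal{O}(2)$-correction'' intrinsic to the formula and the passage from degree $k+1$ on $X$ to degree $k$ on $H_1$ --- which lowers the relevant rank by $2$ --- the combination lands exactly on the constant attached to the pair $(H_1,m-1)$. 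Thus the hypothesis, in both forms, is self-propagating.

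Granting this, the first part of (1) follows by iteration: since $\mathrm{ch}_1(H_1)=-K_{H_1}\ge(2m-3)c_1(L_1)$ with $(2m-3)c_1(L_1)$ ample for $m\ge 2$, $H_1$ is Fano, and iterating produces a chain $X=X_0,X_1,\dots,X_{m-1}$ with all $X_i$ Fano (the propagated bound forces $\dim X_i\ge 2(m-i)-1\ge 1$ for $1\le i\le m-1$), so $X$ admits an $m$-th order minimal family and $\overline{N}_X\ge m$. If in addition every minimal family of $X$ has $L$-degree $1$, the same argument applies to every choice along the chain --- the ``covered by lines'' property being regenerated at each step under the Chern positivity --- so every such chain has $X_1,\dots,X_{m-1}$ Fano and $\underline{N}_X\ge m$.

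For (2): $\dim X_{m-1}\ge 1$ and $X_{m-1}$ is Fano, hence contains a rational curve $R$. Pulling $R$ up one level at a time through the universal families $U_i\to X_i$ and $U_i\to X_{i-1}$ --- each $U_i\to X_i$ being a $\mathbb{P}^1$-bundle since the curves involved are lines --- produces a tower of $\mathbb{P}^1$-bundles over $\mathbb{P}^1$ mapping birationally onto an $m$-dimensional, hence rational, subvariety of $X$ through $x$; so $X$ is covered by rational varieties of dimension $m$. To upgrade ``rational variety of dimension $m$'' to ``projective space of dimension $m-1$'' (resp.\ $m$ under the stronger hypothesis) one tracks at each level that the swept-out subvariety is embedded linearly with respect to $L_i$, equivalently that the iterated variety of minimal rational tangents contains a linearly embedded projective subspace of the appropriate dimension; the Chern positivity forces this, and the accounting shows that the linear structure survives $m-1$ of the $m$ levels under the stated hypothesis and one level further under the stronger one, the extra unit in the constant being exactly what pays for the additional level. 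The main obstacle is to arrange that both the numerical hypothesis and the ``covered by lines'' property are exactly self-propagating under passage to the minimal family: the former fixes the constants $2m+1-2^k$ and $2m+2-2^k$, while the latter guarantees the $\mathbb{P}^1$-bundle and linear structures that make the conclusions exception-free where Theorem \ref{Thm1} had exceptions; the final identifications with projective space then rely on the standard characterizations of $\mathbb{P}^n$ by its minimal rational curves.
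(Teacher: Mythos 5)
Your overall architecture (iterate through the chain of minimal families, drive everything with the Araujo--Castravet Chern character formula, use the degree-$1$ condition to build the rational varieties and projective spaces) matches the paper, but the technical core of your plan has a genuine gap. You propose to show that the full hypothesis is self-propagating, i.e.\ that ${\rm ch}_k(H_1) \ge \frac{2(m-1)+1-2^k}{k!}c_1(L_1)^k$ for \emph{all} $1\le k\le m-1$, by feeding the lower bounds on ${\rm ch}_1(X),\dots,{\rm ch}_m(X)$ into the formula for ${\rm ch}_j(H_1)$. That formula has coefficients $b_{(1,j,k)}=\frac{(-1)^{j+1-k}B_{j+1-k}}{(j+1-k)!}$, and for $j\ge 4$ some of these are negative (e.g.\ $b_{(1,4,1)}=B_4/4!<0$), so substituting lower bounds for the ${\rm ch}_k(X)$ terms is not legitimate there; controlling those terms would require \emph{upper} bounds on intersection numbers that the hypothesis does not provide. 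The paper sidesteps this entirely: it never estimates ${\rm ch}_j(H_i)$ for $j\ge 3$. It only needs $j=1$ (to show each $H_i$ is Fano) and $j=2$ (to control the next step), and Proposition \ref{Prop3} shows that precisely for $j=1,2$ all coefficients $b_{(i,j,k)}$ are positive, with closed-form evaluations at $t=1$ and $t=2$ that make the constants $2m+1-2^k$ telescope correctly. Everything is expressed directly in terms of the Chern characters of $X$ via the composed transform $T^i$, not via a propagated hypothesis on $H_i$.

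A second gap: your justification that each $H_i$ is again ``covered by lines'' (that $a_{i+1}=1$) is asserted rather than proved, and it is not automatic from the curves of $H_i$ being lines under the tangent map. The paper's mechanism is a dimension-drop argument: since ${\rm dim}H_{i+1}<{\rm dim}H_i$ and ${\rm dim}H_{i+1}=\frac{{\rm dim}H_i}{2}a_{i+1}+T^2\bigl({\rm ch}_2(H_{i-1})\bigr)-2$, the inequality $T^2\bigl({\rm ch}_2(H_{i-1})\bigr)>a_{i+1}$ forces $a_{i+1}=1$. This is exactly where the two numerical hypotheses in part (2) diverge: under $2m+1-2^k$ the inequality only holds for $i+1<m$, so one gets $a_2=\dots=a_{m-1}=1$ and hence coverings by $\mathbb{P}^{m-1}$'s (via Proposition \ref{Prop1}(2)), while the extra $+1$ in $2m+2-2^k$ is needed to push it to $i+1=m$ and obtain $a_m=1$, hence coverings by $\mathbb{P}^m$'s. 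Your sketch gestures at this (``the extra unit \ldots pays for the additional level'') but does not identify the step at which the weaker hypothesis actually fails, which is the whole content of the distinction between the two conclusions in (2).
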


\begin{rmk}
The projective space $\mathbb{P}^n$ satisfies the assumption of Theorem \ref{Thm2} for $m=n$  because
$${\rm ch}_k (\mathbb{P}^n) = \frac{n+1}{k!} c_1\bigl(\mathscr{O}(1)\bigl)^k$$
for every $1 \le k \le n$.
In fact, $\underline{N}_{\mathbb{P}^n} = \overline{N}_{\mathbb{P}^n} = n$ (see Example \ref{Exam1}(1)).
On the other hand, the quadric hypersurface $Q^n$ satisfies the assumption of Theorem \ref{Thm3} for $m=\lceil \frac{n}{2} \rceil$ because
$${\rm ch}_k (Q^n) = \frac{n+2-2^k}{k!} c_1\bigl(\mathscr{O}(1)\bigl)^k$$
for every $1 \le k \le n$.
In fact, $\underline{N}_{Q^n} = \overline{N}_{Q^n} = \lceil \frac{n}{2} \rceil$ (see Example \ref{Exam1}(2)) and it is covered by projective spaces of dimension $\lfloor \frac{n}{2} \rfloor$.
We also remark that, under the assumption of Theorem \ref{Thm3}, ${\rm ch}_k(X)$ is positive for $k <{\rm log}_2 (2m+1)$ but may possibly not be positive for $k \ge {\rm log}_2 (2m+1)$.
\end{rmk}

This paper is organized as follows. 
In Section 2, we review positivity conditions for cycles, minimal families of rational curves, and higher order minimal families of rational curves.  
In Section 3, we discuss computations of the Chern characters of higher order minimal families. 
In Section 4, we prove Theorems \ref{Thm2} and \ref{Thm3}.

\section{Preliminaries}

\subsection{Positivity conditions for cycles}

\begin{dfn}\label{Def1}
Let $X$ be a manifold and $k$ a non-negative integer.
We denote by $N^k(X)$ the group of cycles of codimension $k$ on $X$ modulo numerical equivalence, and set $N^k(X)_{\mathbb{R}}:=N^k(X) \otimes \mathbb{R}$.
For $\alpha \in N^k(X)_{\mathbb{R}}$ and $r \in \mathbb{R}_{\ge 0}$, we write $\alpha \ge r$ (resp.\ $\alpha >r$) when $\alpha \cdot \beta \ge r$ (resp.\ $\alpha \cdot \beta > r$) holds for every non-zero effective integral cycle $\beta$ of dimension $k$ on $X$.
In particular, when $\alpha \ge 0$ (resp.\ $\alpha >0$), we say that $\alpha$ is \textit{nef} (resp.\ \textit{positive}).
For $\alpha_1, \alpha_2 \in N^k(X)_{\mathbb{R}}$, we write $\alpha_1 \ge \alpha_2$ (resp.\ $\alpha_1 > \alpha_2$) when $\alpha_1 - \alpha_2 \ge 0$ (resp.\ $\alpha_1 - \alpha_2 > 0$).
\end{dfn}

\subsection{Minimal families of rational curves}

We refer to \cite{AC1} and \cite[I and II]{Ko} for basic theory of families of rational curves.

\begin{dfn}\label{Def2}
Let $X$ be a Fano manifold and $x$ a general point of $X$.
We denote by $\text{RatCurves}^n(X,x)$ the scheme parametrizing rational curves on $X$ through $x$ (see \cite[II.2]{Ko}).
A proper irreducible component $H$ of $\text{RatCurves}^n(X,x)$ is called a \textit{minimal family} of rational curves on $X$ through $x$.
There always exists such a family, for instance, an irreducible component of $\text{RatCurves}^n(X,x)$ parametrizing rational curves having minimal degree with respect to some fixed ample line bundle on $X$.
It is known that any minimal family $H$ is a smooth projective variety and admits a finite morphism $\tau : H \rightarrow \mathbb{P}(T_x X^{\vee})$ which is called the \textit{tangent map} (see \cite[Theorems 3.3 and 3.4]{Ke}). 
We note that $\tau$ is birational onto its image (see \cite{HM}) and sends a curve which is smooth at $x$ to its tangent direction.
Set $L:=\tau ^* \mathscr{O}(1)$. 
Then we call the pair $(H,L)$ a \textit{polarized minimal family} of rational curves on $X$ through $x$. 
Let $U$ be the universal family of $H$ and let $\pi : U \rightarrow H$, $e: U \rightarrow X$ be the associated morphisms.
We define a linear map
$$T: N^k(X)_{\mathbb{R}} \rightarrow N^{k-1}(H)_{\mathbb{R}},\ \ \alpha \mapsto \pi_* e^* \alpha.$$
\end{dfn}

\begin{lem}\label{Lem1}
Let $X$, $H$, $L$, and $T$ be as in Definitions \ref{Def2}.
For $D \in N^1(X)_{\mathbb{R}}$, let $a$ be the intersection number of $D$ and a curve parametrized by $H$.
Let $k, m$ be positive integers, $\alpha \in N^k(X)_{\mathbb{R}}$, and $r \in \mathbb{R}_{\ge 0}$.
Then the following statements hold:
\begin{enumerate}
\setlength{\itemsep}{3pt}
\item $T(D^m)=a^m c_1(L)^{m-1}$.
\item $T(\alpha \cdot D^m)=a^m T(\alpha) \cdot c_1(L)^m$.
\item ${\rm dim}H = T\bigl(c_1(X)\bigr)-2$. 
\item If $\alpha \ge r$ (resp.\ $\alpha >r$), then $T(\alpha) \ge r$ (resp.\ $T(\alpha) >r$). 
\end{enumerate}
\end{lem}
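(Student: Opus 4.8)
The plan is to compute each statement directly from the definitions of the maps $\pi$, $e$, $\tau$, and $T$, using the projection formula and the geometry of the universal family $U$. Recall that $e : U \to X$ is the evaluation morphism and $\pi : U \to H$ is a $\mathbb{P}^1$-bundle (the universal family of a minimal family), so $\pi_*$ lowers dimension by $1$, which is why $T$ drops codimension by $1$.

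For (1), I would first treat $m=1$: the cycle $e^*D$ restricted to a fiber $\pi^{-1}(h) \cong \mathbb{P}^1$ has degree equal to the intersection number of $D$ with the corresponding curve, namely $a$; hence $\pi_* e^* D = a \cdot [H]$, i.e.\ $T(D) = a$ as an element of $N^0(H)_{\mathbb{R}} \cong \mathbb{R}$. For general $m$, write $e^*(D^m) = (e^*D)^m$ and use the fact that $e^*D$ is, up to a class pulled back from $H$, a relative hyperplane class on the $\mathbb{P}^1$-bundle; more precisely one shows $e^*D \equiv a \xi + \pi^* \delta$ for an appropriate relative class $\xi$ and $\delta \in N^1(H)_{\mathbb{R}}$, and then expands $(e^*D)^m$ and applies $\pi_*$, keeping only the terms that survive pushforward. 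Identifying the surviving class with $a^m c_1(L)^{m-1}$ is exactly where the definition $L = \tau^*\mathscr{O}(1)$ and the compatibility of $\tau$ with $e$ and $\pi$ enter; this is the step I expect to require the most care, since it is where the polarization $L$ on $H$ gets pinned down in terms of the evaluation map. Part (2) then follows from the projection formula: $\pi_* e^*(\alpha \cdot D^m) = \pi_*\bigl(e^*\alpha \cdot (e^*D)^m\bigr)$, and one again substitutes $e^*D \equiv a\xi + \pi^*\delta$, expands, and pushes forward, with the $\pi^*\delta$ terms contributing to $T(\alpha) \cdot c_1(L)^m$ after using (1)-type identities; alternatively one can cite the analogous computation in \cite{AC1}.

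Part (3) is a restatement of a known formula (the dimension of a minimal family through a general point equals $-K_X \cdot C - 2$ for the minimal curve $C$): applying (1) with $D = c_1(X)$ and $m=1$ gives $T(c_1(X)) = -K_X \cdot C = \dim H + 2$. Part (4) is the positivity-preservation statement, and it follows from the projection formula together with the definition of $\ge r$: given a non-zero effective integral cycle $\beta$ of dimension $k-1$ on $H$, pull it back to $U$ via $\pi$ (obtaining an effective cycle of dimension $k$) and push forward to $X$ via $e$; because $e$ is dominant and the general fiber of $\pi$ maps to a curve, $e_*\pi^*\beta$ is a non-zero effective $k$-cycle on $X$, and then
$$
T(\alpha) \cdot \beta \;=\; \pi_* e^*\alpha \cdot \beta \;=\; e^*\alpha \cdot \pi^*\beta \;=\; \alpha \cdot e_*\pi^*\beta \;\ge\; r
$$
by hypothesis, with the strict inequality handled identically. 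The only subtlety is checking that $e_*\pi^*\beta$ is genuinely non-zero and effective, which follows since $e$ is generically finite onto its image of the relevant locus (or, if not, one reduces to an irreducible $\beta$ and tracks the multiplicity); this bookkeeping is routine but should be stated carefully.
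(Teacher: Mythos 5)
Your proposal is correct and takes essentially the same route as the paper: the paper simply cites \cite[Lemma 2.7]{AC1} and \cite[Lemma 2.10]{Su2} for (1)--(3), and for (4) gives exactly your argument, namely $T(\alpha)\cdot\beta=\alpha\cdot(e_*\pi^*\beta)$ by the projection formula together with the fact that $e_*\pi^*\beta$ is a non-zero effective cycle. Your sketch of the $\mathbb{P}^1$-bundle computation for (1)--(2) is just an unpacking of those citations, and you correctly flag the one genuinely delicate point (pinning down the relative class as $c_1(L)$ via the tangent map, which in \cite{AC1} comes from the identity $(e^*D)^2\equiv a\,e^*D\cdot\pi^*c_1(L)$ using the section over $x$ contracted by $e$), which you are entitled to defer to \cite{AC1} as the paper does.
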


\begin{proof}
(1), (2), and (3) have been proved in \cite[Lemma 2.7]{AC1} and \cite[Lemma 2.10]{Su2}.
If $\beta$ is an effective integral cycle on $H$ of dimension $k-1$, then $T(\alpha) \cdot \beta = \alpha \cdot (e_{*} {\pi}^{*} \beta)$ by the projection formula and $e_{*} {\pi}^{*} \beta$ is a non-zero effective cycle on $X$ (see \cite[Lemma 2.7]{AC1}).
This implies (4).
\end{proof}

\begin{lem}\label{Lem2}
Let $X$, $x$, $H$, and $L$ be as in Definitions \ref{Def2} and we denote by $H^{\circ}$ the subvariety of $H$ parametrizing curves which are smooth at $x$.
\begin{enumerate}
\setlength{\itemsep}{3pt}
\item Assume that there is a proper rational variety $Z \subset H^{\circ}$ of dimension $k$.
Then there is a proper rational variety $x \in Y \subset X$ of dimension $k+1$.
\item Assume that there is a proper variety $Z \subset H^{\circ}$ such that $(Z,L|_Z)\cong (\mathbb{P}^k,\mathscr{O}(1))$. 
Then there is a generically injective morphism $f: (\mathbb{P}^{k+1},p) \rightarrow (X,x)$ which maps lines through $p$ birationally to curves parametrized by $H$. 
\item Assume that, for a general point $h \in H$, there is a proper variety $h \in Z_h \subset H$ such that $(Z_h,L|_{Z_h})\cong (\mathbb{P}^k,\mathscr{O}(1))$.
Then there is a generically injective morphism $f: (\mathbb{P}^{k+1},p) \rightarrow (X,x)$ which maps lines through $p$ birationally to curves parametrized by $H$. 
\end{enumerate}
\end{lem}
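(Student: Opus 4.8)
All three parts are proved by restricting the universal family $\pi : U \to H$ over the given subvariety and studying the resulting $\mathbb{P}^1$-bundle together with the evaluation morphism $e$. For (1): put $U_Z := \pi^{-1}(Z)$, with induced morphisms $\pi_Z : U_Z \to Z$ and $e_Z : U_Z \to X$. Since $\pi$ is a $\mathbb{P}^1$-bundle so is $\pi_Z$, hence $U_Z$ is rational because $Z$ is, and $Y := e_Z(U_Z)$ is a proper variety containing $x$ (every curve parametrized by $H$ passes through $x$). As $Z \subset H^{\circ}$, the curves it parametrizes are smooth at $x$ and their tangent directions fill out the $k$-dimensional variety $\tau(Z)$ (using that $\tau$ is finite and birational onto its image, \cite{Ke}, \cite{HM}); a $k$-dimensional family of curves through $x$ with a $k$-dimensional set of tangent directions sweeps out exactly a $(k+1)$-fold, so $\dim Y = k+1$, and the same separation of tangent directions shows $e_Z$ is birational onto $Y$, so $Y$ is rational.

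For (2): from $L = \tau^*\mathscr{O}(1)$ and $(Z, L|_Z) \cong (\mathbb{P}^k, \mathscr{O}(1))$ it follows that $\tau|_Z$ is a linear embedding of $\mathbb{P}^k$ into $\mathbb{P}(T_x X^{\vee})$. Again set $U_Z := \pi^{-1}(Z) \to Z = \mathbb{P}^k$, a $\mathbb{P}^1$-bundle, and let $\sigma : Z \to U_Z$ be the section marking the point of each curve lying over $x$; this exists since $Z \subset H^{\circ}$. The plan is to identify $(U_Z, \sigma(Z))$ with $(\mathrm{Bl}_p\mathbb{P}^{k+1}, E)$, $E$ the exceptional divisor. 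First I would compute the normal bundle of $\sigma(Z)$: for a general line $\ell \subset Z$ the section $\sigma(\ell)$ in the ruled surface $\pi^{-1}(\ell)$ has self-intersection $-\deg_L \ell = -1$ (this is the restriction to $\sigma(\ell)$ of the identity $N_{\sigma(H^{\circ})/U} \cong -L$, which one checks for instance on $\mathbb{P}^n$), so $N_{\sigma(Z)/U_Z}$ has degree $-1$ on a line and hence equals $\mathscr{O}_{\mathbb{P}^k}(-1)$. Writing $U_Z = \mathbb{P}(\mathscr{E})$, the section $\sigma(Z)$ corresponds to a surjection $\mathscr{E} \to \mathscr{N}$ whose kernel is then forced to be $\mathscr{N} \otimes \mathscr{O}_{\mathbb{P}^k}(1)$; twisting by $\mathscr{N}^{-1}$ and using $H^1(\mathbb{P}^k, \mathscr{O}(1)) = 0$ gives $U_Z \cong \mathbb{P}(\mathscr{O}_{\mathbb{P}^k} \oplus \mathscr{O}_{\mathbb{P}^k}(1))$, which is $\mathrm{Bl}_p\mathbb{P}^{k+1}$ with $\sigma(Z)$ the exceptional divisor. (Alternatively, contract $\sigma(Z)$ directly — a $\mathbb{P}^k$ with normal bundle $\mathscr{O}(-1)$ contracts to a smooth point $p \in V$ — and observe that the $\mathbb{P}^1$-fibres push down to a covering family of rational curves through $p$ of anticanonical degree $\dim V + 1$, forcing $V \cong \mathbb{P}^{k+1}$.) Since $e$ sends $\sigma(Z)$ to the single point $x$, it factors through the blow-down $\beta : U_Z \to \mathbb{P}^{k+1}$ as $e = f \circ \beta$; then $f(p) = x$, each line through $p$ is $\beta(\pi^{-1}(z))$ for a unique $z \in Z$ and is mapped by $f$ onto the curve $e(\pi^{-1}(z))$ parametrized by $z$, birationally for general $z$ since the general minimal rational curve is birationally parametrized by $\mathbb{P}^1$; and $f$ is generically injective because $e|_{U_Z}$ is, distinct points of $Z$ giving curves with distinct tangent directions that are smooth at $x$.

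For (3): one runs the argument of (2) with $Z_h$ in place of $Z$, the only new points being that $Z_h$ need not lie in $H^{\circ}$. The $x$-marking is a priori only a section over the dense open $Z_h \cap H^{\circ}$; its closure $S \subset \pi^{-1}(Z_h)$ is proper over $Z_h$, contains no fibre of $\pi_{Z_h}$ hence is quasi-finite and so finite over $Z_h$, and is birational onto the normal variety $Z_h \cong \mathbb{P}^k$, so by Zariski's main theorem $S \to Z_h$ is an isomorphism and the marking extends to a section $\sigma(Z_h)$. For the normal bundle, a general line $\ell \subset Z_h$ avoids $H \setminus H^{\circ}$, which has codimension at least two in $H$ (\cite{Ke}), so the computation $\sigma(\ell)^2 = -\deg_L\ell = -1$ from (2) still applies and gives $N_{\sigma(Z_h)/\pi^{-1}(Z_h)} \cong \mathscr{O}_{\mathbb{P}^k}(-1)$; the remainder of the argument of (2) is then identical.

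The step I expect to be the main obstacle is the normal bundle identification underlying (2) and (3): establishing $N_{\sigma(Z)/\pi^{-1}(Z)} \cong \mathscr{O}_{\mathbb{P}^k}(-1)$ from the hypothesis that lines in $Z$ have $L$-degree one, via the formula for the normal bundle of the $x$-section, and deducing that $\pi^{-1}(Z)$ with this section is the blow-up of $\mathbb{P}^{k+1}$ at a point; in (3) there is the additional point of checking that restricting to a general line is enough to kill the contribution of curves that are singular at $x$.
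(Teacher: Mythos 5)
For parts (1) and (2) your argument is essentially a reconstruction of what the paper simply cites: (1) is attributed to \cite[II Corollary 2.12 and V Proposition 3.7.5]{Ko} and (2) is exactly \cite[Lemma 2.3]{AC1}, whose proof is the blow-up/blow-down analysis you describe (your justification of the key identity $N_{\sigma(H^{\circ})/U}\cong \sigma^*L^{-1}$ by ``checking it on $\mathbb{P}^n$'' is not a proof, but the identity is a known ingredient of that lemma, so this is a presentational weakness rather than a mathematical one).

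Part (3) is where you genuinely diverge from the paper, and your route has a concrete gap. The paper does \emph{not} try to run the blow-down argument over a $Z_h$ that may meet $H\setminus H^{\circ}$; instead it proves that for some (in fact a general) $h$ one has $Z_h\subset H^{\circ}$, and then invokes (2) verbatim. The argument: if $Z_h\not\subset H^{\circ}$ for general $h$, then since $H\setminus H^{\circ}$ is \emph{finite} by \cite[Theorem 3.3]{Ke}, some fixed $h_0\in H\setminus H^{\circ}$ lies on $Z_h$ for general $h$; hence $H$ is covered by $L$-degree-one rational curves through the fixed point $h_0$, which forces $(H,L)\cong(\mathbb{P}^{\dim H},\mathscr{O}(1))$, and then $H^{\circ}=H$ by \cite[Corollary 2.8]{Ar} --- a contradiction. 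Your alternative --- extend the $x$-section over all of $Z_h$ by Zariski's main theorem and compute its normal bundle on a general line avoiding $H\setminus H^{\circ}$ --- works only for $k\ge 2$. When $k=1$, $Z_h$ is itself the only line, it may a priori pass through a point of $H\setminus H^{\circ}$ (precisely the ``all lines through $h_0$'' configuration above), and then you cannot choose a line inside $H^{\circ}$ on which to compute; moreover the degree of a line bundle on $\mathbb{P}^1$ is not determined by its restriction to the complement of a point, so the isomorphism $N\cong\mathscr{O}(-1)$ on the dense open $Z_h\cap H^{\circ}$ does not force $N\cong\mathscr{O}_{\mathbb{P}^1}(-1)$ on $Z_h$ (the extended section could have self-intersection $-e<-1$ in the Hirzebruch surface $\pi^{-1}(Z_h)$, and the blow-down to $\mathbb{P}^{k+1}$ fails). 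The case $k=1$ is not marginal: it is the base case when Lemma \ref{Lem2}(3) is applied iteratively in Proposition \ref{Prop1}. To close the gap you need the paper's reduction (or an equivalent argument ruling out the configuration where every $Z_h$ passes through a fixed point of $H\setminus H^{\circ}$).
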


\begin{proof}
(1) follows from \cite[II Corollary 2.12 and V Proposition 3.7.5]{Ko}.
(2) is \cite[Lemma 2.3]{AC1}.
To prove (3), we show $Z_h \subset H^{\circ}$ for some point $h \in H$.
We assume by contradiction that $Z_h \not\subset H^{\circ}$ for a general point $h \in H$.
Then, since $H \backslash H^{\circ}$ is finite (see \cite[Theorem 3.3]{Ke}), there is a point $h_0 \in H \backslash H^{\circ}$ such that $h_0 \in Z_h$ for a general point $h \in H$. 
This means that $H$ is covered by rational curves of $L$-degree $1$ through the fixed point $h_0$.
Thus $(H,L)$ must be isomorphic to $(\mathbb{P}^m, \mathscr{O}(1))$.
However, this implies $H^{\circ}=H$ by \cite[Corollary 2.8]{Ar}.
\end{proof}

\subsection{Higher order minimal families of rational curves}

\begin{dfn}\label{Def3}
Let $X$ be a Fano manifold and take a minimal family $H_1$ of rational curves on $X$ through a fixed general point.
If $H_1$ is also a Fano manifold, we can take a minimal family $H_2$ of rational curves on $H_1$ through a fixed general point, which we call a \textit{second order minimal family}.
Moreover, if $H_2$ is also a Fano manifold, we can take a minimal family $H_3$ of rational curves on $H_2$ through a fixed general point, which we call a \textit{third order minimal family}.
Similarly we define an \textit{$i$-th order minimal family} $H_i$.
We denote the chain of these higher order minimal families by 
$$X \vdash H_1 \vdash H_2 \vdash H_3 \vdash \cdots \vdash H_i.$$
We define $\underline{N}_X$ (resp.\ $\overline{N}_X$) as the smallest (resp.\ largest) number $i$ such that $X$ admits an $i$-th order minimal family $H_i$ which is not a Fano manifold.
\end{dfn}

\begin{exam}\label{Exam1}
We give some examples of computations of $\underline{N}_X$ and $\overline{N}_X$.
See \cite[1.4]{Hw} for examples of minimal families of rational curves.
\begin{enumerate}
\setlength{\itemsep}{5pt}
\item The projective space $\mathbb{P}^n$ has the following chain:
$$\mathbb{P}^n \vdash \mathbb{P}^{n-1} \vdash \mathbb{P}^{n-2} \vdash \cdots \vdash \mathbb{P}^2 \vdash \mathbb{P}^1 \vdash {\rm pt},$$
so we have $$\underline{N}_{\mathbb{P}^n} = \overline{N}_{\mathbb{P}^n} =n.$$
\item The quadric hypersurface $Q^n$  has the following chain:
\begin{eqnarray*}
&Q^{2m} \vdash Q^{2m-2} \vdash Q^{2m-4} \vdash \cdots \vdash Q^4 \vdash Q^2 \vdash {\rm pt},\\
&Q^{2m+1} \vdash Q^{2m-1} \vdash Q^{2m-3} \vdash \cdots \vdash Q^3 \vdash Q^1 \vdash {\rm pt},
\end{eqnarray*}
so we have $$\underline{N}_{Q^n} = \overline{N}_{Q^n} = \Bigl\lceil \frac{n}{2} \Bigr\rceil.$$
\item The Grassmannian $G(k,m)$ has the following two chains:
$$G(k,m) \vdash \mathbb{P}^{k-1} \times \mathbb{P}^{m-k-1} \vdash \begin{cases} \mathbb{P}^{k-2} \vdash \mathbb{P}^{k-3} \vdash \cdots \vdash \mathbb{P}^1 \vdash {\rm pt} \\ \mathbb{P}^{m-k-2} \vdash \mathbb{P}^{m-k-3} \vdash \cdots \vdash \mathbb{P}^1 \vdash {\rm pt} \end{cases},$$
so we have $$\underline{N}_{G(k,m)} =  {\rm min} \{ k, m-k\},\ \ \overline{N}_{G(k,m)} = {\rm max} \{ k, m-k\}.$$
\end{enumerate}
\end{exam}

\begin{rmk}
For any Fano manifold $X$ of dimension $n$, clearly $1 \le \underline{N}_X \le \overline{N}_X \le n$ holds.
In addition, if $\overline{N}_X = n$, then $X$ is isomorphic to $\mathbb{P}^n$ by Cho, Miyaoka, and Shepherd-Barron's characterization of projective spaces (see \cite{CMSB}).
Furthermore, the author believe that Conjecture \ref{Conj2} below holds.
A special case of this conjecture has been proved in \cite{Su1}.
\end{rmk}

\begin{conj}\label{Conj2}
Let $X$ be a Fano manifold of dimension $n$ and Picard number $1$.
\begin{enumerate}
\item If $\underline{N}_X \ge \lceil \frac{n}{2} \rceil$, then $X$ is isomorphic to either $\mathbb{P}^n$ or $Q^n$.
\item If $\overline{N}_X \ge \lceil \frac{n}{2} \rceil$, then $X$ is isomorphic to one of $\mathbb{P}^n$, $Q^n$, and $G(2,\frac{n}{2}+2)$.
\end{enumerate}
\end{conj}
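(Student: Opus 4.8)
The plan is to argue by induction on $n=\dim X$, pushing the hypothesis onto a first minimal family. Two transfer facts drive the induction: for every minimal family $H_1$ of $X$ one has $\underline{N}_{H_1}\ge\underline{N}_X-1$, and for the minimal family $H_1$ appearing in a chain realizing $\overline{N}_X$ one has $\overline{N}_{H_1}\ge\overline{N}_X-1$; both hold because one may prepend $X\vdash H_1$ to a chain starting at $H_1$. Moreover, since the pseudo-index of a Fano manifold $Y$ is at most $\dim Y+1$, Lemma \ref{Lem1}(3) gives $\dim H_{i+1}\le\dim H_i-1$, so along $X\vdash H_1\vdash H_2\vdash\cdots$ the dimensions strictly decrease; a chain with $\lceil n/2\rceil-1$ Fano steps therefore forces
$$\dim H_1 \ge \left\lceil \frac{n}{2}\right\rceil - 1,$$
equivalently the pseudo-index of $X$ is at least $\lceil n/2\rceil+1$.

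I would first dispose of the large pseudo-index cases: if $\dim H_1\ge n-1$ then the pseudo-index is $\ge n+1$ and $X\cong\mathbb{P}^n$ by Cho, Miyaoka, and Shepherd-Barron's theorem \cite{CMSB}, while if $\dim H_1=n-2$ then the pseudo-index equals $n$, forcing $X\cong Q^n$; this already settles $n\le 3$. In the remaining range $\lceil n/2\rceil-1\le\dim H_1\le n-3$ (so $n\ge 5$), set $p=\dim H_1\le n-3$, so that $\lceil p/2\rceil\le\lceil n/2\rceil-1$, which is $\le\underline{N}_{H_1}$ (resp.\ $\le\overline{N}_{H_1}$). If $H_1$ has Picard number one, the induction hypothesis identifies $H_1$ with $\mathbb{P}^p$ or $Q^p$ (and, for part (2), also $G(2,\frac{p}{2}+2)$), and one must then recover $X$ from the projective geometry of its variety of minimal rational tangents $\tau(H_1)\subset\mathbb{P}(T_xX^{\vee})$ --- this is precisely what the Cartan--Fubini type extension theorem of Hwang and Mok and the attendant rigidity results for projective spaces, quadrics, and Grassmannians accomplish, once the finite map $\tau$ and the polarization $L$ are under control. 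If instead $H_1$ has Picard number $\ge 2$, one invokes the structural input that such a minimal family is a Segre-embedded product $\mathbb{P}^a\times\mathbb{P}^b$, which forces $X$ to be a Grassmannian $G(a+1,a+b+2)$; matching dimensions leaves only $G(2,\frac{n}{2}+2)$, which is $Q^4$ (hence nothing new) in the range relevant to part (1) and is the extra member of the list in part (2).

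The step I expect to be the real obstacle is the borderline subcase $\dim H_1=\lceil n/2\rceil-1$. Here every dimension drop along the chain is exactly $1$, which by \cite{CMSB} forces each higher minimal family $H_i$ ($i\ge 1$) to be a projective space, so $X$ is a Fano manifold of Picard number one, covered by projective spaces, whose variety of minimal rational tangents is a Veronese re-embedding of $\mathbb{P}^{\lceil n/2\rceil-1}$ inside $\mathbb{P}^{n-1}$. Classifying such $X$ is delicate: besides $\mathbb{P}^n$ and $Q^n$ one must either rule out or incorporate sporadic candidates --- already the quintic del Pezzo fourfold $V_5^4$ has $\underline{N}=2=\lceil 4/2\rceil$, and the Lagrangian Grassmannian ${\rm LG}(3,6)$ has $\underline{N}=3=\lceil 6/2\rceil$, although neither is isomorphic to $\mathbb{P}^n$ or $Q^n$ --- so the conjectural list should be understood for $n$ large (or enlarged in small dimensions), and the heart of a proof is a dedicated argument in this borderline case, say through the second fundamental form of $\tau(H_1)$, isolating exactly which Fano manifolds survive. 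A parallel route, closer to the techniques of this paper, is to run the Chern-character computations of Section 3 in reverse: each assertion that $H_i$ is a Fano manifold is a positivity condition on the Chern characters of $X$, and iterating $\lceil n/2\rceil-1$ of them should force ${\rm ch}_k(X)$ to be positive for $k$ up to roughly ${\rm log}_2 n$, after which one would appeal to Conjecture \ref{Conj1} together with the classifications in \cite{AC2} and \cite{ABCJMMTV}; this route is, however, conditional on Conjecture \ref{Conj1}.
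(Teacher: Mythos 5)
This statement is a \emph{conjecture}: the paper offers no proof of it, only the remark that a special case was established in \cite{Su1}, so there is no argument of the author's to compare yours against. What you have written is accordingly an outline rather than a proof, and you yourself locate the fatal gap: the borderline case $\dim H_1=\lceil n/2\rceil-1$, where every dimension drop along the chain is exactly one, is left entirely open. Worse, the two examples you mention in passing are not merely ``delicate sporadic candidates'' to be absorbed later --- if your computations are right (the quintic del Pezzo fourfold has $H_1\cong\mathbb{P}^1$, hence $\underline{N}=2=\lceil 4/2\rceil$; ${\rm LG}(3,6)$ has $H_1\cong\mathbb{P}^2$ embedded by $\mathscr{O}(2)$, hence $\underline{N}=3=\lceil 6/2\rceil$), then they satisfy the hypothesis of Conjecture \ref{Conj2} with Picard number $1$ and lie on neither list, i.e.\ they are outright counterexamples to the statement as written. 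A proof strategy cannot succeed against a false statement; at best your observations show the conjecture needs to be amended (e.g.\ restricted to large $n$ or with an enlarged target list), which is a different task from proving it.

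Beyond this, several individual steps would not survive scrutiny even in the non-borderline range. The inductive appeal to Conjecture \ref{Conj2} for $H_1$ requires $H_1$ to have Picard number $1$, which you do not control, and your fallback --- that a minimal family of Picard number $\ge 2$ must be a Segre product $\mathbb{P}^a\times\mathbb{P}^b$ forcing $X$ to be a Grassmannian --- is not a known theorem in this generality. The Cartan--Fubini/VMRT recognition step needs the variety of minimal rational tangents as an \emph{embedded} subvariety $\tau(H_1)\subset\mathbb{P}(T_xX^{\vee})$ together with its polarization, whereas the induction only hands you the abstract isomorphism type of $H_1$; the case where the embedding is a nontrivial Veronese (exactly your borderline case, and exactly what happens for ${\rm LG}(3,6)$) is where this distinction bites. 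Finally, your alternative route through the Chern character computations of Section 3 is, as you note, conditional on Conjecture \ref{Conj1} and also runs the implications in the wrong direction: positivity of ${\rm ch}_k(X)$ implies the higher families are Fano, but the $H_i$ being Fano does not conversely force positivity of ${\rm ch}_k(X)$.
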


\begin{ntt}\label{Ntt1}
Let $X$ be a Fano manifold and suppose that $X$ admits an $i$-th order minimal family
$$X \vdash H_1 \vdash H_2 \vdash H_3 \vdash \cdots \vdash H_i.$$
We denote by $L_i$ the polarization associated to $H_i$ as in Definition \ref{Def2}.
For $i \ge 2$, we denote by $a_i$ the degree of rational curves parametrized by $H_i$ with respect to $L_{i-1}$.
We denote by $T^i$ the composition of $T$'s associated to $H_1, H_2, H_3, \ldots ,H_i$ in Definition \ref{Def2}:
$$T^i: N^k(X)_{\mathbb{R}} \xrightarrow{T} N^{k-1}(H_1)_{\mathbb{R}} \xrightarrow{T} N^{k-2}(H_2)_{\mathbb{R}} \xrightarrow{T} \ \cdots \ \xrightarrow{T} N^{k-i}(H_i)_{\mathbb{R}}.$$
\end{ntt}

\begin{prop}\label{Prop1}
Let $X$ be a Fano manifold and $m$ a positive integer.
Suppose that $X$ admits an $m$-th order minimal family
$$X \vdash H_1 \vdash H_2 \vdash H_3 \vdash \cdots \vdash H_m.$$
Let $a_i$ be as in Notation \ref{Ntt1}.
\begin{enumerate}
\setlength{\itemsep}{5pt}
\item We assume $a_2 = a_3 = \cdots = a_m =1$.
Then $X$ is covered by rational varieties of dimension $m$.
\item We assume that $H_1$ parametrizes rational curves of degree $1$ with respect to some ample line bundle on $X$ and $a_2 = a_3 = \cdots = a_{m-1} =1$.
Then $X$ is covered by rational varieties of dimension $m$.
\item We assume that $H_1$ parametrizes rational curves of degree $1$ with respect to some ample line bundle on $X$ and $a_2 = a_3 = \cdots = a_m =1$.
Then $X$ is covered by projective spaces of dimension $m$.
\end{enumerate}
\end{prop}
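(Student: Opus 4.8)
The plan is to build, through a general point of $X$, a rational subvariety (resp.\ a projective subspace) of dimension $m$ by climbing the tower $X \vdash H_1 \vdash \cdots \vdash H_m$ from the bottom, applying Lemma~\ref{Lem2} at each level to lift a subvariety from $H_i$ to $H_{i-1}$ while raising its dimension by one.

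For parts (1) and (3) I would argue by descending induction on $i$, showing that a general point $h_i \in H_i$ lies on a subvariety $Z_i$ with $(Z_i, L_i|_{Z_i}) \cong (\mathbb{P}^{m-i}, \mathscr{O}(1))$ for $1 \le i \le m-1$. The base case $i = m-1$ comes from Lemma~\ref{Lem2}(3) applied to $H_m$ with $k = 0$ (take $Z_h = \{h\}$ for every $h$): it produces a generically injective $f : (\mathbb{P}^1, p) \to (H_{m-1}, h_{m-1})$ carrying $\mathbb{P}^1$ birationally onto a curve of $L_{m-1}$-degree $a_m = 1$, so $f^{*}L_{m-1} \cong \mathscr{O}(1)$; since $L_{m-1} = \tau_{m-1}^{*}\mathscr{O}(1)$ is ample and globally generated, $f$ is a closed embedding, giving $Z_{m-1} \cong \mathbb{P}^1$ with $L_{m-1}|_{Z_{m-1}} = \mathscr{O}(1)$. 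The inductive step is identical: Lemma~\ref{Lem2}(3) applied to $H_i$ with $k = m-i$ and the family $\{Z_i\}$ gives a generically injective $f : (\mathbb{P}^{m-i+1}, p) \to (H_{i-1}, h_{i-1})$ sending lines through $p$ birationally onto curves of $L_{i-1}$-degree $a_i = 1$, so $f^{*}L_{i-1}$ has degree $1$ on every line through $p$, whence $f^{*}L_{i-1} \cong \mathscr{O}(1)$ and $f$ is again a closed embedding. Once $Z_1 \cong \mathbb{P}^{m-1}$ is found through a general point of $H_1$, I finish with one more application of Lemma~\ref{Lem2}(3) to $H_1$: in part (3), the curves parametrized by $H_1$ have $A$-degree $1$, so the resulting $f : (\mathbb{P}^m, p) \to (X, x)$ satisfies $f^{*}A \cong \mathscr{O}(1)$, is a closed embedding, and places $(\mathbb{P}^m, \mathscr{O}(1))$ through a general point of $X$; in part (1) there is no such line bundle on $X$, so Lemma~\ref{Lem2}(3) only yields a generically injective $f : (\mathbb{P}^m, p) \to (X, x)$, whose image is a rational variety of dimension $m$. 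Since the point is general, $X$ is covered in both cases.

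For part (2) the degree $a_m$ need not be $1$, so the base case above cannot be run. Instead I would start from a general point $z \in H_m^{\circ}$ and climb the whole tower with Lemma~\ref{Lem2}(1): each application, from $H_i$ down to $H_{i-1}$ (with $H_0 := X$), replaces a rational subvariety $Z \subset H_i^{\circ}$ of dimension $d$ by the union $\bigcup_{[C] \in Z} C \subset H_{i-1}$ of the rational curves it parametrizes, a rational subvariety of dimension $d+1$ containing the base point of $H_{i-1}$. Beginning with $\{z\}$ (dimension $0$) and performing the $m$ lifts from $H_m$ up to $X$, I would reach a rational subvariety of $X$ of dimension $m$ through a general point, hence covering $X$.

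The main obstacle --- and the technical heart of part (2) --- is that Lemma~\ref{Lem2}(1) requires the subvariety being lifted to lie in $H_i^{\circ}$, the complement in $H_i$ of a finite set. So at each of the $m-1$ intermediate steps I must ensure that the rational subvariety $Z \subset H_i$ obtained so far (a general member of the family of such subvarieties the construction produces) avoids $H_i \setminus H_i^{\circ}$. I would handle this exactly as in the proof of Lemma~\ref{Lem2}(3): if a general member met $H_i \setminus H_i^{\circ}$, then, this locus being finite, a general member would pass through a single fixed point $b \in H_i \setminus H_i^{\circ}$; hence $H_i$ would be covered by the minimal rational curves through $b$ parametrized by the points of these subvarieties, and since such curves (parametrized by $H_{i+1}$) form an unsplit family, bend-and-break together with \cite{CMSB} would force $H_i \cong \mathbb{P}^{\dim H_i}$ --- for $i \le m-2$ the curves even have $L_i$-degree $1$ and the argument used for Lemma~\ref{Lem2}(3) applies verbatim --- so that $H_i^{\circ} = H_i$ by \cite[Corollary 2.8]{Ar}, contradicting $b \in H_i \setminus H_i^{\circ}$. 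Therefore a general $Z$ lies in $H_i^{\circ}$ and the lift is legitimate. Making this dichotomy precise at the lowest level, where the curves carry the uncontrolled degree $a_m$, and disposing of the low-dimensional edge cases (notably when some $H_i$ is a projective line), is where most of the care will be needed.
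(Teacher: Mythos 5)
Your overall strategy is the paper's: climb the tower by iterating Lemma~\ref{Lem2}, using Lemma~\ref{Lem2}(3) to propagate $(\mathbb{P}^{k},\mathscr{O}(1))$'s for parts (1) and (3) and Lemma~\ref{Lem2}(1) for part (2). Your treatment of (1) and (3) is sound and matches the intended argument (the closed-embedding step via $f^{*}L_{i-1}\cong\mathscr{O}(1)$ and the finite tangent map is exactly how \cite{AC1} proceeds), and the fact that Lemma~\ref{Lem2}(3) internally disposes of the $H_1^{\circ}$ issue in part (1) is a correct and nice observation.

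The problem is the paragraph you call the ``technical heart'' of part (2). First, the contradiction argument you sketch does not work as stated: if a general lifted variety $Z\subset H_i$ passed through a fixed point $b\in H_i\setminus H_i^{\circ}$, it would \emph{not} follow that $H_i$ is covered by minimal rational curves \emph{through $b$} --- the curves making up $Z$ all pass through the base point $x_i$ of $H_{i+1}$, not through $b$ --- so the ``unsplit covering family through a fixed point $\Rightarrow\ \mathbb{P}^n$'' step has no hypothesis to run on, and the verbatim transfer of the Lemma~\ref{Lem2}(3) argument fails for the same reason. Second, and more to the point, the whole detour is unnecessary: the issue you are trying to circumvent does not arise under the stated hypotheses. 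If $a_i=1$, then every curve parametrized by $H_i$ has degree $1$ with respect to $L_{i-1}=\tau_{i-1}^{*}\mathscr{O}(1)$, hence maps finitely and birationally onto a line under $\tau_{i-1}$ and is therefore smooth (Zariski's main theorem); so $H_i^{\circ}=H_i$ outright, and likewise $H_1^{\circ}=H_1$ when $H_1$ parametrizes degree-$1$ curves. This is precisely the observation the paper's proof hinges on. With it, every lift in part (2) is legitimate: $H_i^{\circ}=H_i$ for $2\le i\le m-1$ by $a_i=1$, $H_1^{\circ}=H_1$ by the degree-$1$ hypothesis, and at the bottom level one only needs a single point of the dense open set $H_m^{\circ}$. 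Replace your paragraph by this remark and part (2) is complete.
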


\begin{proof}
We can prove these statements by repeatedly applying Lemma \ref{Lem2} as in the proof of \cite[Theorem 5.1(2)]{Su2} and the proof of \cite[Theorem 1.5]{AC1}.
Here the assumption $a_i=1$ implies ${H_i}^{\circ}=H_i$ and the assumption that $H_1$ parametrizes rational curves of degree $1$ implies ${H_1}^{\circ}=H_1$.
\end{proof}

\section{Computations of Chern characters}

\begin{prop}\label{Prop2}
We define $b_{(i,j,k)} \in \mathbb{Q}\ (1 \le i,\ 1 \le j,\ 1 \le k \le i+j)$ as follows:
\begin{eqnarray*}
b_{(1,j,k)} &:=& \frac{(-1)^{j+1-k} B_{j+1-k}}{(j+1-k)!},\\
b_{(i,j,k)} &:=& \sum_{m=0}^{{\rm min}\{j,\,i+j-k\}} \frac{(-1)^m B_m}{m!} b_{(i-1,j+1-m,k)}\ \ \text{if}\ i\ge2,
\end{eqnarray*}
where $B_m$'s are the Bernoulli numbers (see Definition \ref{Def4} below). 
Let $H_i$, $L_i$, $a_i$, and $T^i$ be as in Notation \ref{Ntt1}.
We assume $a_2 = a_3 = \cdots = a_i =1$.
Then the $j$-th Chern character of $H_i$ is given by the following formula:
\begin{eqnarray*}
{\rm ch}_j(H_i) &=& -\frac{i}{j!}c_1(L_i)^j + \sum_{k=1}^{i} b_{(i,j,k)} T^k\bigl({\rm ch}_k(X)\bigr) c_1(L_i)^j\\
& & + \sum_{k=i+1}^{i+j} b_{(i,j,k)} T^i\bigl({\rm ch}_k(X)\bigr) \cdot c_1(L_i)^{i+j-k}.
\end{eqnarray*}
In particular,
$$c_1(H_1) = \frac{{\rm dim}{H_1}}{2} c_1(L_1)+T\bigl({\rm ch}_2(X)\bigr).$$
\end{prop}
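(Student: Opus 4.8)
The plan is to prove the displayed formula by induction on $i$, the ``in particular'' clause being the case $i=j=1$.

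\emph{Base case $i=1$.} Let $\pi\colon U\to H_1$ be the universal family of the polarized minimal family $(H_1,L_1)$ and $e\colon U\to X$ the evaluation morphism of Definition~\ref{Def2}. Since every parametrized curve passes through the fixed general point, there is a section $S\subset U$ contracted by $e$ to that point; and since $H_1$ is a minimal family, $e^{*}T_X$ restricts to every fiber of $\pi$ as $\mathscr{O}(2)\oplus\mathscr{O}(1)^{\oplus\dim H_1}\oplus\mathscr{O}^{\oplus(\dim X-1-\dim H_1)}$. Hence the relative tangent bundle $T_{\pi}$ is a subbundle of $e^{*}T_X$ with locally free quotient $\mathscr{N}$, one has $T_{H_1}\cong\pi_{*}\bigl(\mathscr{N}\otimes\mathscr{O}_U(-S)\bigr)$, and $R^{1}\pi_{*}\bigl(\mathscr{N}\otimes\mathscr{O}_U(-S)\bigr)=0$. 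Grothendieck--Riemann--Roch for $\pi$ then gives
$${\rm ch}(T_{H_1})=\pi_{*}\Bigl(\bigl(e^{*}{\rm ch}(T_X)-e^{c_1(T_{\pi})}\bigr)\,e^{-S}\,{\rm td}(T_{\pi})\Bigr),$$
and the Bernoulli numbers enter through ${\rm td}(T_{\pi})=\sum_{m\ge0}\tfrac{(-1)^m B_m}{m!}c_1(T_{\pi})^m$. One then feeds in the dictionary relating classes on $U$ to data on $H_1$: $\pi_{*}e^{*}\alpha=T(\alpha)$ by the definition of $T$; $e^{*}\alpha\cdot S=0$ for $\alpha\in N^k(X)_{\mathbb{R}}$ with $k\ge1$, because $e|_{S}$ is constant; and the self-intersection of $S$ together with the expression of $c_1(T_{\pi})$ in terms of $S$ and $\pi^{*}c_1(L_1)$ (as in \cite{AC1}). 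These collapse the right-hand side to the asserted polynomial in the $T({\rm ch}_k(X))$ and $c_1(L_1)$ with coefficients $b_{(1,j,k)}$; this $i=1$ identity is essentially already contained in \cite{AC1}. Specializing to $j=1$ and using Lemma~\ref{Lem1}(3) to rewrite $T({\rm ch}_1(X))=T(c_1(X))=\dim H_1+2$ yields the ``in particular'' formula.

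\emph{Inductive step.} Since $H_i$ is a minimal family of rational curves on $H_{i-1}$, the base case applied with $X$, $H_1$, $L_1$ replaced by $H_{i-1}$, $H_i$, $L_i$ gives
$${\rm ch}_j(H_i)=-\tfrac1{j!}c_1(L_i)^j+\sum_{l=1}^{j+1}\tfrac{(-1)^{j+1-l}B_{j+1-l}}{(j+1-l)!}\,T\bigl({\rm ch}_l(H_{i-1})\bigr)\,c_1(L_i)^{j+1-l}.$$
Into this one substitutes the inductive formula for each ${\rm ch}_l(H_{i-1})$. Every summand there is a scalar multiple of $T^{k}({\rm ch}_k(X))\cdot c_1(L_{i-1})^{s}$, and, as $a_i=1$, Lemma~\ref{Lem1}(1) turns $c_1(L_{i-1})^{s}$ into $c_1(L_i)^{s-1}$ while Lemma~\ref{Lem1}(2) turns $T^{i-1}({\rm ch}_k(X))\cdot c_1(L_{i-1})^{s}$ into $T^{i}({\rm ch}_k(X))\cdot c_1(L_i)^{s}$ (the scalars $T^{k}({\rm ch}_k(X))$ with $k\le i-1$ being unaffected); so $T({\rm ch}_l(H_{i-1}))$ is again a polynomial of the expected shape in the $T^{k}({\rm ch}_k(X))$ and $c_1(L_i)$. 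Reindexing the outer sum by $m=j+1-l$ and collecting the coefficient of each $T^{k}({\rm ch}_k(X))$ yields exactly the convolution $\sum_{m}\tfrac{(-1)^m B_m}{m!}\,b_{(i-1,j+1-m,k)}$, the range of $m$ being cut off at $\min\{j,i+j-k\}$ by the constraint $k\le(i-1)+l$ present in the inductive formula; this is the recursion defining $b_{(i,j,k)}$. The split into the two sums in the statement reflects the split of the inductive formula into its $T^{k}$-part with $k\le i-1$ and its $T^{i-1}$-part; the boundary index $k=i$, where $T^{i}({\rm ch}_i(X))$ is a scalar, may be placed in either sum.

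\emph{Main difficulty.} The conceptually delicate part is the geometric input of the base case — the realization of $T_{H_1}$ as a pushforward with vanishing $R^{1}\pi_{*}$, and especially the dictionary translating $S$ and $c_1(T_{\pi})$ into $c_1(L_1)$ — though this may be quoted from \cite{AC1}. Granting it, the remaining work is the inductive bookkeeping: one must check that the two Bernoulli-number sums agree term by term, notably that the term ``$-\tfrac{i-1}{l!}c_1(L_{i-1})^l$'' carried up from $H_{i-1}$ combines with the ``$-\tfrac1{j!}c_1(L_i)^j$'' produced at this step to give ``$-\tfrac{i}{j!}c_1(L_i)^j$'' — which relies on the identity $\sum_{m=0}^{j}\binom{j+1}{m}(-1)^m B_m=j+1$ — and that the cutoff $\min\{j,i+j-k\}$ in the recursion is forced exactly by the summation ranges.
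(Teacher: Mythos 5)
Your proposal is correct and follows essentially the same route as the paper, which simply quotes \cite[Proposition 1.3]{AC1} for the case $i=1$ and obtains the general case by repeatedly applying that proposition together with Lemma \ref{Lem1} (referring to \cite[Proposition 4.4]{Su2}); you have merely unpacked the Grothendieck--Riemann--Roch input behind the base case and the bookkeeping of the inductive convolution. Your identification of the cutoff $\min\{j,i+j-k\}$ and the Bernoulli identity $\sum_{m=0}^{j}\binom{j+1}{m}(-1)^m B_m=j+1$ needed for the constant term are both accurate.
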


\begin{proof}
If $i=1$, this is \cite[Proposition 1.3]{AC1}.
If $i \ge 2$, we obtain this formula by repeatedly using \cite[Proposition 1.3]{AC1} and Lemma \ref{Lem1}.
See \cite[Proposition 4.4]{Su2}.
\end{proof}

\begin{dfn}\label{Def4}
The Bernoulli numbers $B_m$'s are defined by the following formula:
$$\frac{t}{e^t-1} = \sum_{m=0}^{\infty} \frac{B_m}{m!} t^m.$$
\end{dfn}

\begin{lem}\label{Lem3}
For positive integers $k, n$, we denote by $P_{(k,n)}$ the set of $k$-tuples of positive integers $(l_1, l_2, \ldots , l_k)$ such that $l_1 + l_2 + \cdots + l_k = n$.
(Notice that $P_{(k,n)} = \emptyset$ if $k > n$.)
Let $b_{(i,j,k)}$ be as in Proposition \ref{Prop2}.
Then we have the following formulas:
\begin{enumerate}
\item For any positive integers $i, k$ satisfying $k \le i+1$,
$$b_{(i,1,k)} = \sum_{(l_1, l_2, \ldots , l_k) \in P_{(k,i+1)}} \frac{1}{l_1 l_2 \cdots l_k}.$$
\item For any positive integers $i, k$ satisfying $k \le i+2$,
$$b_{(i,2,k)} = \sum_{(l_1, l_2, \ldots , l_k) \in P_{(k,i+2)}} \frac{1}{l_1 l_2 \cdots l_k} - \sum_{(l_1, l_2, \ldots , l_k) \in P_{(k,i+1)}} \frac{1}{2l_1 l_2 \cdots l_k}.$$
\end{enumerate}
\end{lem}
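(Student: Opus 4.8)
The right‑hand sides should first be recognized as coefficient extractions. Since $-\log(1-t)=\sum_{l\ge 1}t^{l}/l$, one has $\sum_{(l_1,\dots,l_k)\in P_{(k,n)}}\tfrac1{l_1\cdots l_k}=[t^{n}]\bigl(-\log(1-t)\bigr)^{k}$; write $F(t):=-\log(1-t)$, so that (1) and (2) are the assertions $b_{(i,1,k)}=[t^{i+1}]F(t)^{k}$ and $b_{(i,2,k)}=[t^{i+2}]F(t)^{k}-\tfrac12[t^{i+1}]F(t)^{k}$. The only structural fact about $F$ that I will use is that $t\mapsto F(t)$ inverts $u\mapsto 1-e^{-u}$, i.e. $1-e^{-F(t)}=t$, equivalently $F(1-e^{-s})=s$. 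On the Bernoulli side, put $\beta_{m}:=(-1)^{m}B_{m}/m!$, so that $\sum_{m\ge 0}\beta_{m}s^{m}=\tfrac{s}{1-e^{-s}}$ and the definition in Proposition \ref{Prop2} reads $b_{(1,j,k)}=\beta_{j+1-k}$ (with $\beta$ of negative index $=0$) and $b_{(i,j,k)}=\sum_{m=0}^{\min\{j,\,i+j-k\}}\beta_{m}b_{(i-1,j+1-m,k)}$ for $i\ge 2$.

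The plan is to fix $k$ and pass to the generating function in the variable $j$. It is convenient to adopt the conventions $b_{(i,j,k)}:=0$ whenever $k>i+j$, and to allow $j=0$ by setting $b_{(1,0,k)}:=\beta_{1-k}$ and $b_{(i,0,k)}:=b_{(i-1,1,k)}$ for $i\ge 2$. With these, the truncation in the recursion becomes cosmetic: a term $\beta_{m}b_{(i-1,j+1-m,k)}$ with $m>i+j-k$ has $j+1-m\le k-i$, hence $k>(i-1)+(j+1-m)$ and the factor $b_{(i-1,j+1-m,k)}$ vanishes, so
$$b_{(i,j,k)}=\sum_{m=0}^{j}\beta_{m}b_{(i-1,j+1-m,k)}\qquad(i\ge 2,\ j\ge 0).$$
Setting $\widehat Q_{i}(s):=\sum_{j\ge 0}b_{(i,j,k)}s^{j}$, summing this over $j$ and reindexing by $j'=j+1-m$ gives $\widehat Q_{i}(s)=\dfrac{\widehat Q_{i-1}(s)-\widehat Q_{i-1}(0)}{1-e^{-s}}$ for $i\ge 2$, while $\widehat Q_{1}(s)=\sum_{j\ge 0}\beta_{j+1-k}s^{j}=\dfrac{s^{k}}{1-e^{-s}}$. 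I then prove by induction on $i$ that
$$\widehat Q_{i}(s)=\sum_{r\ge 0}(1-e^{-s})^{r}\,[t^{i+r}]F(t)^{k}.$$
For $i=1$ this says $\dfrac{s^{k}}{1-e^{-s}}=\dfrac{1}{1-e^{-s}}\sum_{n\ge 1}(1-e^{-s})^{n}[t^{n}]F(t)^{k}=\dfrac{F(1-e^{-s})^{k}}{1-e^{-s}}$, which is exactly $F(1-e^{-s})=s$ (using that $1-e^{-s}$ has no constant term, so substituting $t\mapsto 1-e^{-s}$ into $F(t)^{k}$ is legitimate and $[t^{0}]F^{k}=0$). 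The inductive step is immediate once one notes $[s^{0}](1-e^{-s})^{r}=\delta_{r,0}$: subtracting $\widehat Q_{i-1}(0)=[t^{i-1}]F^{k}$ kills the $r=0$ term, and dividing by $1-e^{-s}$ shifts $r\mapsto r-1$. Finally, since $[s^{1}](1-e^{-s})^{r}=\delta_{r,1}$ and $[s^{2}](1-e^{-s})^{r}=\delta_{r,2}-\tfrac12\delta_{r,1}$, reading off $[s^{1}]\widehat Q_{i}$ and $[s^{2}]\widehat Q_{i}$ yields $b_{(i,1,k)}=[t^{i+1}]F^{k}$ and $b_{(i,2,k)}=[t^{i+2}]F^{k}-\tfrac12[t^{i+1}]F^{k}$, which after rewriting the coefficients as sums over $P_{(k,\cdot)}$ are precisely (1) and (2). (As a byproduct, $[s^{j}]\widehat Q_{i}$ gives a closed formula for every $b_{(i,j,k)}$.)

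The argument is short, so the ``hard part'' is really the bookkeeping in the second paragraph: verifying that the $\min$‑truncation and the $j=0$ extension cause no damage, and — above all — recognizing that the correct generating variable organizes the numbers $b_{(i,j,k)}$ by $j$, for only then do the Bernoulli series $s/(1-e^{-s})$ and the logarithm $-\log(1-t)$ get reconciled, through the single identity $F\circ(1-e^{-\bullet})=\mathrm{id}$. I should also note a slicker route to (2) once (1) is in hand: for $k\le i+1$ the recursion for $b_{(i+1,1,k)}$ gives $b_{(i+1,1,k)}=b_{(i,2,k)}+\tfrac12 b_{(i,1,k)}$, hence $b_{(i,2,k)}=b_{(i+1,1,k)}-\tfrac12 b_{(i,1,k)}$, into which one substitutes (1); the remaining edge case $k=i+2$ is handled by hand, both sides equalling $1$ (here $b_{(i,2,i+2)}$ ``collapses'' down the recursion to $b_{(1,i+1,i+2)}=\beta_{0}=1$, while $P_{(i+2,i+2)}$ is a singleton and $P_{(i+2,i+1)}$ is empty). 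An equivalent but slightly more delicate packaging replaces $\widehat Q_i$ by polynomials $G_j$ with $\sum_{j}G_{j}(t)s^{j}=\tfrac{1-e^{-st}}{t-1+e^{-st}}$ and the formula $b_{(i,j,k)}=[t^{i+j}]\bigl(G_{j}(t)F(t)^{k}\bigr)$ ($G_1=1$, $G_2=1-t/2$); I would avoid it, since the identity $\sum_{j}G_{j}(t)t^{-j}s^{j}=\tfrac{t}{t-1+e^{-s}}$ has to be read in $\mathbb{Q}[t,t^{-1}][[s]]$, expanded in $s$, rather than as a power series in $t$.
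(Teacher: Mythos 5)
Your proof is correct. Note, though, that the paper itself does not prove Lemma \ref{Lem3} at all: it simply refers the reader to the proof of \cite[Theorem 5.1]{Na}. So any complete argument here is ``different'' from the paper's, and yours is a clean, self-contained one. The verification goes through: with $\beta_m=(-1)^mB_m/m!$ one indeed has $\sum_m\beta_m s^m=s/(1-e^{-s})$; your observation that the $\min$-truncation in the recursion is harmless (because $m>i+j-k$ forces $k>(i-1)+(j+1-m)$, where the coefficient vanishes by convention) is exactly right; the reindexing $j'=j+1-m$ gives $\widehat Q_i=(\widehat Q_{i-1}-\widehat Q_{i-1}(0))/(1-e^{-s})$ with $\widehat Q_1=s^k/(1-e^{-s})$; and the induction $\widehat Q_i(s)=\sum_{r\ge0}(1-e^{-s})^r[t^{i+r}]F(t)^k$ closes because $F(1-e^{-s})=s$ and $(1-e^{-s})^r$ has order $r$ in $s$. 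Extracting $[s^1]$ and $[s^2]$ then yields both formulas, including the base case $i=1$. Compared with the direct inductive manipulation of the partition sums that one would expect in \cite{Na} (and which underlies the paper's Lemma \ref{Lem4}), your generating-function route buys more for the same effort: it produces a closed formula for every $b_{(i,j,k)}$, namely $b_{(i,j,k)}=\sum_{r\ge1}[s^j]\bigl((1-e^{-s})^r\bigr)\,[t^{i+r}]F(t)^k$, of which (1) and (2) are the $j=1,2$ specializations; the price is the formal-power-series bookkeeping you carefully carry out (the $j=0$ extension and the legitimacy of the substitution $t\mapsto 1-e^{-s}$). Your alternative derivation of (2) from (1) via $b_{(i,2,k)}=b_{(i+1,1,k)}-\tfrac12 b_{(i,1,k)}$, with the edge case $k=i+2$ checked by collapsing the recursion to $b_{(1,i+1,i+2)}=\beta_0=1$, is also correct and is probably the shortest path if one only wants the two stated identities.
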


\begin{proof}
See the proof of \cite[Theorem 5.1]{Na}.
\end{proof}

\begin{lem}\label{Lem4}
For non-negative integers $l \le m$, we denote by $e_l(t_1,t_2,\ldots ,t_m)$ the $l$-th elementary symmetric polynomial in $m$-variables $t_1,t_2, \ldots ,t_m$, namely,
\begin{eqnarray*}
e_0(t_1,t_2,\ldots ,t_m) &:=& 1,\\
e_1(t_1,t_2,\ldots ,t_m) &:=& \sum_{1 \le i \le m} t_i,\\
e_2(t_1,t_2,\ldots ,t_m) &:=& \sum_{1 \le i < j \le m} t_i t_j,\\
&\vdots&\\
e_m(t_1,t_2,\ldots ,t_m) &:=& t_1 t_2 \cdots t_m.
\end{eqnarray*}
Let $k \le n$ be positive integers and $P_{(k,n)}$ as in Lemma \ref{Lem3}.
Then we have the following formula:
$$\sum_{(l_1, l_2, \ldots , l_k) \in P_{(k,n)}} \frac{1}{l_1 l_2 \cdots l_k} = \frac{k!}{n!} e_{n-k}(1,2,\ldots ,n-1)$$
\end{lem}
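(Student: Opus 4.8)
The plan is to prove the identity
$$\sum_{(l_1, l_2, \ldots , l_k) \in P_{(k,n)}} \frac{1}{l_1 l_2 \cdots l_k} = \frac{k!}{n!} e_{n-k}(1,2,\ldots ,n-1)$$
by relating both sides to the coefficient of a single explicit polynomial, namely the rising factorial $t(t+1)(t+2)\cdots(t+n-1)$. Recall the classical fact that this product equals $\sum_{j=0}^{n-1} e_{n-1-j}(1,2,\ldots,n-1)\, t^{j+1}$ (expanding $\prod_{i=1}^{n-1}(t+i)$ and collecting powers of $t$), so that $e_{n-k}(1,2,\ldots,n-1)$ is exactly the coefficient of $t^k$ in $t(t+1)\cdots(t+n-1)$. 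Thus the right-hand side is $\frac{k!}{n!}$ times $[t^k]\,t(t+1)\cdots(t+n-1)$.

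For the left-hand side I would use generating functions: the quantity $\sum_{l\ge 1} \frac{x^l}{l}$ is $-\log(1-x)$, and hence $\frac{1}{k!}\left(-\log(1-x)\right)^k = \sum_{n \ge k} \left(\sum_{(l_1,\ldots,l_k)\in P_{(k,n)}} \frac{1}{k!\, l_1\cdots l_k}\right) x^n$ — but it is cleaner to instead recall the exponential formula / the known expansion $\frac{(-\log(1-x))^k}{k!} = \sum_{n\ge k} \left[\genfrac{[}{]}{0pt}{}{n}{k}\right] \frac{x^n}{n!}$, where $\left[\genfrac{[}{]}{0pt}{}{n}{k}\right]$ are the unsigned Stirling numbers of the first kind. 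Comparing coefficients gives
$$\sum_{(l_1,\ldots,l_k)\in P_{(k,n)}} \frac{1}{l_1\cdots l_k} = \frac{k!}{n!}\left[\genfrac{[}{]}{0pt}{}{n}{k}\right].$$
Then I would invoke the standard identity $\left[\genfrac{[}{]}{0pt}{}{n}{k}\right] = e_{n-k}(1,2,\ldots,n-1)$, which is itself just the statement that $t(t+1)\cdots(t+n-1) = \sum_k \left[\genfrac{[}{]}{0pt}{}{n}{k}\right] t^k$ together with the factored form of that polynomial noted above. Combining the two displayed equalities yields the claim.

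If one prefers to avoid citing Stirling numbers, the same argument can be packaged purely in terms of the polynomial $f_n(t) := t(t+1)\cdots(t+n-1)$: write $f_n(t) = t \prod_{i=1}^{n-1}(t+i)$, so $[t^k]f_n(t) = e_{n-k}(1,\ldots,n-1)$, and separately prove $\sum_{(l_1,\ldots,l_k)\in P_{(k,n)}} \frac{1}{l_1\cdots l_k} = \frac{k!}{n!}[t^k]f_n(t)$ by induction on $n$ using the Pascal-type recursion $f_n(t) = (t+n-1)f_{n-1}(t)$, which on the level of the sums over $P_{(k,n)}$ corresponds to splitting according to whether the last part $l_k$ equals $1$ (contributing from $P_{(k-1,n-1)}$, paired with a part of size $1$, accounting for the $t$-shift) or is at least $2$ (contributing from $P_{(k,n-1)}$ after subtracting $1$ from $l_k$, accounting for the multiplication by $n-1$ after suitable bookkeeping of the $\frac{1}{l_k}$ factor). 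Either route is short; the only mild subtlety — and the step I expect to require the most care — is keeping the factorials $\frac{k!}{n!}$ and the reindexing bookkeeping straight when passing between the sum over compositions and the polynomial coefficient, since an off-by-one in the number of variables of $e_{n-k}$ or in the degree shift would break the identity.
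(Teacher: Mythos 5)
Your proof is correct, but it takes a genuinely different route from the paper. The paper proves the identity by induction on $k$: for $k\ge 2$ it inserts the factor $\frac{1}{n}(l_1+\cdots+l_k)$ into each summand, uses symmetry to reduce to $k$ copies of a sum over $P_{(k-1,m)}$ with $k-1\le m\le n-1$, applies the induction hypothesis, and finishes with a summation identity for elementary symmetric polynomials of $1,2,\ldots,n-1$. You instead identify the left-hand side as $\frac{k!}{n!}$ times the unsigned Stirling number of the first kind $c(n,k)$ via the expansion of $(-\log(1-x))^k$, and the right-hand side as the same quantity via the classical fact that $t(t+1)\cdots(t+n-1)=\sum_k c(n,k)\,t^k$; both steps are standard and the coefficient extraction $[t^k]\,t\prod_{i=1}^{n-1}(t+i)=e_{n-k}(1,\ldots,n-1)$ is right. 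Your route is shorter if one is willing to cite these classical facts, and it makes transparent why Proposition \ref{Prop3} later produces the rising factorial $t(t+1)\cdots(t+i)$; the paper's induction is self-contained and avoids generating functions, at the cost of the symmetrization trick and the final elementary-symmetric-polynomial telescoping step. Your alternative induction on $n$ via $f_n(t)=(t+n-1)f_{n-1}(t)$ would also work, though as you note the split according to whether $l_k=1$ or $l_k\ge 2$ requires some care with the $\frac{1}{l_k}$ weights, and you have only sketched it.
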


\begin{proof}
We use by induction on $k$.
If $k=1$, this is clear.
If $k \ge 2$, then we obtain the conclusion from the induction hypothesis as follows:
\begin{eqnarray*}
& & \sum_{(l_1, l_2, \ldots , l_k) \in P_{(k,n)}} \frac{1}{l_1 l_2 \cdots l_k}\\
&=& \frac{1}{n} \sum_{(l_1, l_2, \ldots , l_k) \in P_{(k,n)}} \frac{l_1+ l_2 + \cdots +l_k}{l_1 l_2 \cdots l_k}\\
&=& \frac{1}{n} \sum_{(l_1, l_2, \ldots , l_k) \in P_{(k,n)}} \Bigl( \frac{1}{l_2 l_3 \cdots l_k} + \frac{1}{l_1 l_3 \cdots l_k} + \cdots + \frac{1}{l_1 l_2 \cdots l_{k-1}} \Bigr)\\
&=& \frac{k}{n} \sum_{m=k-1}^{n-1} \sum_{(l_1, l_2, \ldots , l_{k-1}) \in P_{(k-1,m)}} \frac{1}{l_1 l_2 \cdots l_{k-1}}\\
&=& \frac{k}{n} \sum_{m=k-1}^{n-1} \frac{(k-1)!}{m!} e_{m-k+1}(1,2,\ldots ,m-1)\\
&=& \frac{k!}{n!} \sum_{m=0}^{n-k} \frac{(n-1)!}{(m+k-1)!} e_{m}(1,2,\ldots ,m+k-2)\\
&=& \frac{k!}{n!} e_{n-k}(1,2,\ldots ,n-1).
\end{eqnarray*}
\end{proof}

\begin{prop}\label{Prop3}
Let $b_{(i,j,k)}$ be as in Proposition \ref{Prop2}.
Let $i$ be a positive integer.
\begin{enumerate}
\item The following formula holds in $\mathbb{Q}[t]$:
$$\sum_{k=1}^{i+1} \frac{b_{(i,1,k)}}{k!} t^k = \frac{1}{(i+1)!} t(t+1)(t+2) \cdots (t+i).$$
In particular, $b_{(i,1,k)}>0$ for every $1 \le k \le i+1$ and
$$b_{(i,1,i+1)}=1,\ \ \sum_{k=1}^{i+1} \frac{b_{(i,1,k)}}{k!} = 1,\ \ \sum_{k=1}^{i+1} \frac{b_{(i,1,k)}}{k!} 2^k = i+2.$$
\item The following formula holds in $\mathbb{Q}[t]$:
$$\sum_{k=1}^{i+2} \frac{b_{(i,2,k)}}{k!} t^k = \frac{1}{(i+2)!} t(t+1)(t+2) \cdots (t+i)(t+\frac{i}{2}).$$
In particular, $b_{(i,2,k)}>0$ for every $1 \le k \le i+2$ and
$$b_{(i,2,i+2)}=1,\ \ \sum_{k=1}^{i+2} \frac{b_{(i,2,k)}}{k!} = \frac{1}{2},\ \ \sum_{k=1}^{i+2} \frac{b_{(i,2,k)}}{k!} 2^k = \frac{i+4}{2}.$$
\end{enumerate}
\end{prop}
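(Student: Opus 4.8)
The plan is to replace the recursive definition of the numbers $b_{(i,j,k)}$ by the closed expressions already furnished by Lemmas \ref{Lem3} and \ref{Lem4}, and then to recognise both displayed identities as instances of the classical factorisation
$$\prod_{j=1}^{N}(t+j)=\sum_{l=0}^{N}e_l(1,2,\ldots,N)\,t^{N-l}.$$

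For part (1): combining Lemma \ref{Lem3}(1) with Lemma \ref{Lem4} applied to $n=i+1$ gives $b_{(i,1,k)}=\frac{k!}{(i+1)!}\,e_{i+1-k}(1,2,\ldots,i)$, hence $\frac{b_{(i,1,k)}}{k!}=\frac{1}{(i+1)!}\,e_{i+1-k}(1,2,\ldots,i)$. Substituting this into $\sum_{k=1}^{i+1}\frac{b_{(i,1,k)}}{k!}t^{k}$ and re-indexing by $l=i+1-k$ turns the sum into $\frac{t}{(i+1)!}\sum_{l=0}^{i}e_l(1,\ldots,i)\,t^{i-l}=\frac{t}{(i+1)!}\prod_{j=1}^{i}(t+j)$, which is the asserted polynomial identity. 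The ``in particular'' assertions then follow by inspection: each $e_{i+1-k}(1,\ldots,i)$ is an elementary symmetric function of positive numbers and so is positive, whence $b_{(i,1,k)}>0$; the coefficient of $t^{i+1}$ on the right is $\frac{1}{(i+1)!}$, so $b_{(i,1,i+1)}=1$; and evaluating the identity at $t=1$ and at $t=2$ produces the telescoping products $\frac{(i+1)!}{(i+1)!}=1$ and $\frac{(i+2)!}{(i+1)!}=i+2$.

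For part (2): proceed in the same way, now using Lemma \ref{Lem3}(2) together with Lemma \ref{Lem4} applied to $n=i+2$ and to $n=i+1$, noting that the $k=i+2$ term of the sum over $P_{(k,i+1)}$ is empty and hence zero. This yields $\frac{b_{(i,2,k)}}{k!}=\frac{1}{(i+2)!}e_{i+2-k}(1,\ldots,i+1)-\frac{1}{2(i+1)!}e_{i+1-k}(1,\ldots,i)$. Forming $\sum_{k=1}^{i+2}\frac{b_{(i,2,k)}}{k!}t^{k}$, the first group equals $\frac{t}{(i+2)!}\prod_{j=1}^{i+1}(t+j)$ by the computation of part (1) with $i$ replaced by $i+1$, and the second group equals $\frac{t}{2(i+1)!}\prod_{j=1}^{i}(t+j)$ by part (1) itself. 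Pulling $\frac{t}{(i+2)!}\prod_{j=1}^{i}(t+j)$ out of the difference leaves the linear factor $(t+i+1)-\frac{(i+2)!}{2(i+1)!}=(t+i+1)-\frac{i+2}{2}=t+\frac{i}{2}$, which is exactly the claimed factorisation. As in part (1), the right-hand polynomial is a product of polynomials with positive coefficients --- here one uses $i\ge 1$, so $\frac{i}{2}>0$ --- whence $b_{(i,2,k)}>0$; its leading coefficient gives $b_{(i,2,i+2)}=1$; and substituting $t=1$ and $t=2$ gives $\frac{(i+1)!}{(i+2)!}\cdot\frac{i+2}{2}=\frac{1}{2}$ and $\frac{(i+2)!}{(i+2)!}\cdot\frac{i+4}{2}=\frac{i+4}{2}$.

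I do not anticipate a real obstacle: the substance is entirely contained in Lemmas \ref{Lem3} and \ref{Lem4}. The only points that need a little attention are the re-indexings $k\mapsto i+1-k$ (respectively $k\mapsto i+2-k$) required to match the elementary-symmetric sums against the product forms, and the verification that the extra factor $t+\frac{i}{2}$ emerges with the correct sign so that positivity of the coefficients is preserved --- which holds precisely because $i$ is a positive integer.
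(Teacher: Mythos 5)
Your proposal is correct and follows essentially the same route as the paper: both substitute the closed forms from Lemmas \ref{Lem3} and \ref{Lem4} and recognise the resulting elementary-symmetric sums as the expansions of $t(t+1)\cdots(t+i)$ and of the difference yielding the extra factor $t+\frac{i}{2}$. The only cosmetic difference is that you deduce the positivity of the $b_{(i,j,k)}$ directly from the factorisation, whereas the paper also cites Nagaoka for that point; your derivation is valid.
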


\begin{proof}
These formulas follow immediately from Lemmas \ref{Lem3} and \ref{Lem4}:
\begin{eqnarray*}
\sum_{k=1}^{i+1} \frac{b_{(i,1,k)}}{k!} t^k &=& \frac{1}{(i+1)!} \sum_{k=1}^{i+1} e_{i-k+1}(1,2,\ldots ,i) t^k\\
&=& \frac{1}{(i+1)!} t(t+1)(t+2) \cdots (t+i),
\end{eqnarray*}
\begin{eqnarray*}
\sum_{k=1}^{i+2} \frac{b_{(i,2,k)}}{k!} t^k &=& \frac{1}{(i+2)!} \sum_{k=1}^{i+2} e_{i-k+2}(1,2,\ldots ,i+1) t^k\\
& & -  \frac{1}{2(i+1)!} \sum_{k=1}^{i+1} e_{i-k+1}(1,2,\ldots ,i) t^k\\
&=& \frac{1}{(i+2)!} t(t+1)(t+2) \cdots (t+i)(t+i+1)\\
& & - \frac{1}{2(i+1)!} t(t+1)(t+2) \cdots (t+i)\\
&=& \frac{1}{(i+2)!} t(t+1)(t+2) \cdots (t+i)(t+\frac{i}{2}).
\end{eqnarray*}
We remark that $b_{(i,1,k)}>0$ and $b_{(i,2,k)}>0$ have been proved in \cite[Theorem 5.1]{Na}.
\end{proof}

\section{Proof of theorems}

\begin{thm}\label{Thm4}
Let $X$ be a Fano manifold and $m$ a positive integer.
Assume that
$${\rm ch}_{k}(X) \ge \frac{m+1}{k!}$$
for every $1 \le k \le m$.
Then the following statements hold:
\begin{enumerate}
\setlength{\itemsep}{3pt}
\item $\underline{N}_X \ge m$.
\item $X$ is covered by rational varieties of dimension $m$.
Moreover, if $X$ is covered by rational curves of degree $1$ with respect to some ample line bundle on $X$, then $X$ is covered by projective spaces of dimension $m$.
\end{enumerate}
\end{thm}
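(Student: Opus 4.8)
The plan is to induct on $m$, constructing the chain $X \vdash H_1 \vdash \cdots \vdash H_m$ by showing at each stage that $H_i$ is again a Fano manifold satisfying the same type of Chern character bound (with a shifted numerical constant), so that a minimal family $H_{i+1}$ exists. The base case $m=1$ is essentially Mori: ${\rm ch}_1(X) = c_1(X) \ge 2$ forces $X$ to admit a minimal family $H_1$, and since $X$ is not itself a point, $\underline{N}_X \ge 1$ trivially. For the inductive step, I would first apply Lemma \ref{Lem1}(4) to transport the hypotheses through the tangent-map construction: writing $a$ for the $L_1$-degree... no, rather, applying $T$ to ${\rm ch}_k(X) \ge \frac{m+1}{k!}$ gives $T({\rm ch}_k(X)) \ge \frac{m+1}{k!}$ in $N^{k-1}(H_1)_{\mathbb{R}}$. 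Then I would feed these into the formula of Proposition \ref{Prop2} for ${\rm ch}_j(H_1)$, i.e.
$$
{\rm ch}_j(H_1) = -\frac{1}{j!}c_1(L_1)^j + \sum_{k=1}^{1} b_{(1,j,k)} T^k\bigl({\rm ch}_k(X)\bigr) c_1(L_1)^j + \sum_{k=2}^{1+j} b_{(1,j,k)} T\bigl({\rm ch}_k(X)\bigr)\cdot c_1(L_1)^{1+j-k}.
$$
The key computation is to bound this from below by $\frac{m}{j!}$ times the appropriate cycle class. Here I expect to need the positivity of the coefficients $b_{(1,j,k)}$ together with the normalization identity $\sum_k \frac{b_{(1,j,k)}}{k!} = \cdots$; Proposition \ref{Prop3} gives exactly these sums for $j=1,2$, and one would need the analogous statement for general $j$ — presumably $\sum_{k=1}^{i+j}\frac{b_{(i,j,k)}}{k!}2^k$ and $\sum_{k}\frac{b_{(i,j,k)}}{k!}$ have clean closed forms from the generating-function identity. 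The upshot should be that ${\rm ch}_j(H_1) \ge \frac{m}{j!} c_1(L_1)^j$ for $1 \le j \le m-1$, which is the hypothesis of the theorem for $H_1$ with $m$ replaced by $m-1$, provided one also knows $L_1$ is ample (true, since $\tau$ is finite) and ${\rm dim}H_1$ is large enough; the dimension bound ${\rm dim}H_1 = T(c_1(X)) - 2 \ge (m+1) - 2 = m-1$ from Lemma \ref{Lem1}(3) handles the latter.

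Once the induction produces the full chain $X \vdash H_1 \vdash \cdots \vdash H_m$, part (1) follows: the inductive constant runs out at stage $m$, meaning $H_m$ need no longer be Fano, so $\underline{N}_X \ge m$. One subtlety: I must check that at stage $m$ the constant has degraded to something that still forces $H_{m-1}$ to be Fano (namely ${\rm ch}_1(H_{m-1}) = c_1(H_{m-1}) \ge 2$) but permits $H_m$ to fail Fanoness. This is a bookkeeping point — the constant at stage $i$ should be $m+1-i$, reaching $2$ exactly when $i = m-1$.

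For part (2), I would invoke Proposition \ref{Prop1}. To apply Proposition \ref{Prop1}(1) I need $a_2 = a_3 = \cdots = a_m = 1$, i.e. that each higher minimal family parametrizes lines with respect to the previous polarization. This is where I expect the main obstacle to lie, and I suspect the resolution is that the strong Chern character bound forces these degrees to be $1$: by Lemma \ref{Lem1}(1), applying $T$ to a power of the polarization picks up a factor $a_i^j$, and combined with the lower bounds on ${\rm ch}_j$ one gets a numerical inequality that is only consistent with $a_i = 1$ (for instance, comparing ${\rm ch}_j(H_{i-1})$ against $\frac{\text{const}}{j!}c_1(L_{i-1})^j$ and then transporting via $T$ forces $a_i^j \cdot (\text{something}) \le \text{something smaller unless } a_i=1$). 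Alternatively, the tangent-map VMRT theory may directly give $a_i = 1$ for minimal families of Fano manifolds of this type. Granting $a_i = 1$ for all $i$, Proposition \ref{Prop1}(1) gives the covering by rational varieties of dimension $m$; and under the extra hypothesis that $X$ is covered by lines with respect to an ample bundle, $H_1$ parametrizes degree-$1$ curves, so Proposition \ref{Prop1}(3) applies and upgrades the conclusion to a covering by projective spaces of dimension $m$. The main work, then, is (i) the general-$j$ coefficient identities extending Proposition \ref{Prop3}, and (ii) pinning down $a_i = 1$ — and I would organize the writeup so that (i) is extracted as a lemma before the theorem and (ii) is handled inside the induction alongside the Fanoness check.
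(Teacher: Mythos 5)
Your overall architecture (build the chain $X \vdash H_1 \vdash \cdots \vdash H_m$, check Fanoness and $a_i=1$ at each stage, then invoke Proposition \ref{Prop1}) matches the paper, but the way you propose to run the induction has a genuine gap. You want $H_1$ to inherit the full hypothesis ${\rm ch}_j(H_1) \ge \frac{m}{j!}c_1(L_1)^j$ for all $j \le m-1$, using positivity of the coefficients $b_{(1,j,k)}$ together with closed forms for $\sum_k b_{(1,j,k)}/k!$ for general $j$. Those coefficients are \emph{not} all positive: $b_{(1,j,k)} = (-1)^{j+1-k}B_{j+1-k}/(j+1-k)!$, and since $B_4 = -\tfrac{1}{30}$ one has $b_{(1,5,2)} = B_4/4! < 0$ (and similarly whenever $j+1-k$ is an even integer $\ge 4$). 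With negative coefficients, lower bounds on $T\bigl({\rm ch}_k(X)\bigr)$ alone cannot bound ${\rm ch}_j(H_1)$ from below, so the inductive transfer of the hypothesis breaks down for $j \ge 4$. The paper avoids this entirely: it never estimates ${\rm ch}_j(H_i)$ for $j \ge 3$. Instead it uses the iterated formula of Proposition \ref{Prop2} to write $c_1(H_i)$ and ${\rm ch}_2(H_{i-1})$ \emph{directly} in terms of the ${\rm ch}_k(X)$, so that only the coefficients $b_{(i,1,k)}$ and $b_{(i,2,k)}$ ever appear, and these are positive with the explicit sums $\sum_k b_{(i,1,k)}/k! = 1$ and $\sum_k b_{(i,2,k)}/k! = \tfrac12$ of Proposition \ref{Prop3}. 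That is the content of Claims \ref{Claim1} and \ref{Claim2}: Fanoness of $H_i$ needs only $\dim H_i > 0$ and $c_1(H_i) > 0$, both reachable from the $j=1$ and $j=2$ data alone.

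Second, the step you flag as the main obstacle, namely $a_{i+1}=1$, is left as a suspicion rather than an argument. The actual mechanism is elementary but specific: Lemma \ref{Lem1} gives $\dim H_{i+1} = \frac{\dim H_i}{2}a_{i+1} + T^2\bigl({\rm ch}_2(H_{i-1})\bigr) - 2$, and the $j=2$ computation yields $T^2\bigl({\rm ch}_2(H_{i-1})\bigr) \ge \frac{m-i+2}{2} \ge \frac32$; since $\dim H_{i+1} < \dim H_i$, this identity is incompatible with $a_{i+1} \ge 2$. Nothing from VMRT theory is needed, and no separate "numerical inequality forcing $a_i=1$" has to be invented. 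Your reading of part (2) (Proposition \ref{Prop1}(1) for the covering by rational $m$-folds, Proposition \ref{Prop1}(3) under the degree-one hypothesis) is correct, but it is contingent on the two points above being repaired.
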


\begin{proof}
We use Notation \ref{Ntt1}.
The result is clear if $m=1$, so we assume $m \ge 2$.

\begin{claim}\label{Claim1}
For any minimal family $X \vdash H_1$, the following statements hold:
\begin{enumerate}
\setlength{\itemsep}{3pt}
\item[{\rm (i)}] $H_1$ is also a Fano manifold.
\item[{\rm (ii)}] $a_2=1$ for any second order minimal family $H_1 \vdash H_2$.
\end{enumerate}
\end{claim}

\begin{proof}
(i) follows immediately from Lemma \ref{Lem1} and Proposition \ref{Prop2}:
$${\rm dim}{H_1} = T\bigl(c_1(X)\bigr)-2 \ge m-1> 0$$
and
$$c_1(H_1) = \frac{{\rm dim}{H_1}}{2} c_1(L_1)+T\bigl({\rm ch}_2(X)\bigr) >0.$$
Moreover, Lemma \ref{Lem1} yields
$${\rm dim}H_2 = T\bigl(c_1(H_1)\bigr)-2 = \frac{{\rm dim}H_1}{2}a_2 + T^2\bigl({\rm ch}_2(X)\bigr)-2$$
and
$$T^2\bigl({\rm ch}_2(X)\bigr) \ge \frac{m+1}{2} \ge \frac{3}{2}.$$
This implies (ii) because ${\rm dim}H_2 < {\rm dim}H_1$.
\end{proof}

\begin{claim}\label{Claim2}
Let $i$ be a positive integer satisfying $2 \le i < m$.
We assume that $X$ admits an $i$-th order minimal family
$$X \vdash H_1 \vdash H_2 \vdash H_3 \vdash \cdots \vdash H_i.$$ 
We also assume $a_2 = a_3 = \cdots = a_i = 1$.
Then the following statements hold:
\begin{enumerate}
\setlength{\itemsep}{3pt}
\item[{\rm (i)}] $H_i$ is also a Fano manifold.
\item[{\rm (ii)}] $a_{i+1}=1$ for any $(i+1)$-th order minimal family $H_i \vdash H_{i+1}$.
\end{enumerate}
\end{claim}

\begin{proof}
We obtain (i) by using Lemma \ref{Lem1}, Propositions \ref{Prop2}, and \ref{Prop3}(1) as follows:
\begin{eqnarray*}
{\rm dim}{H_i} &=& T\bigl(c_1(H_{i-1})\bigr)-2\\
&=& T \Bigl( -(i-1)c_1(L_{i-1}) + \sum_{k=1}^{i-1} b_{(i-1,1,k)} T^k\bigl({\rm ch}_k(X)\bigr)c_1(L_{i-1})\\
& & + b_{(i-1,1,i)} T^{i-1}\bigl({\rm ch}_i(X)\bigr) \Bigr) -2\\
&=& -(i-1) + \sum_{k=1}^{i} b_{(i-1,1,k)} T^k\bigl({\rm ch}_k(X)\bigr) -2\\
&\ge& -(i-1) + \sum_{k=1}^{i} b_{(i-1,1,k)} \frac{m+1}{k!} -2\\
&=& -(i-1) + (m+1) -2 \ > \ 0
\end{eqnarray*}
and
\begin{eqnarray*}
c_1(H_i) &=& -ic_1(L_i) + \sum_{k=1}^i b_{(i,1,k)} T^k\bigl({\rm ch}_k(X)\bigr)c_1(L_i) + b_{(i,1,i+1)} T^i\bigl({\rm ch}_{i+1}(X)\bigr)\\
&\ge& \Bigl( -i + \sum_{k=1}^i b_{(i,1,k)} \frac{m+1}{k!} \Bigr)c_1(L_i) + b_{(i,1,i+1)} T^i\bigl({\rm ch}_{i+1}(X)\bigr)\\
&=&\Biggl( -i + \Bigl( 1-\frac{1}{(i+1)!} \Bigr) (m+1) \Biggr)c_1(L_i) + T^i\big({\rm ch}_{i+1}(X)\bigr) \ > \ 0,
\end{eqnarray*}
where
$$-i + \Bigl( 1-\frac{1}{(i+1)!} \Bigr) (m+1) > -i + \Bigl( 1-\frac{1}{(i+1)!} \Bigr) (i+1) = 1-\frac{1}{i!} > 0.$$
Moreover, Lemma \ref{Lem1}, Propositions \ref{Prop2}, and \ref{Prop3}(2) yield
$${\rm dim}{H_{i+1}} = T\bigl(c_1(H_i)\bigr)-2 = \frac{{\rm dim}H_i}{2}a_{i+1} + T^2\bigl({\rm ch}_2(H_{i-1})\bigr)-2$$
and
\begin{eqnarray*}
T^2\bigl({\rm ch}_2(H_{i-1})\bigr) &=& T^2 \Bigl( -\frac{i-1}{2}c_1(L_{i-1})^2 + \sum_{k=1}^{i-1} b_{(i-1,2,k)} T^k\bigl({\rm ch}_k(X)\bigr)c_1(L_{i-1})^2\\
& & + b_{(i-1,2,i)} T^{i-1}\bigl({\rm ch}_i(X)\bigr) \cdot c_1(L_{i-1}) + b_{(i-1,2,i+1)} T^{i-1}\bigl({\rm ch}_{i+1}(X)\bigr) \Bigr)\\
&=& -\frac{i-1}{2} a_{i+1} +  \sum_{k=1}^i b_{(i-1,2,k)} T^k\bigl({\rm ch}_k(X)\bigr) a_{i+1} + T^{i+1}\bigl({\rm ch}_{i+1}(X)\bigr)\\
&\ge& \Bigl( -\frac{i-1}{2} + \sum_{k=1}^i b_{(i-1,2,k)} \frac{m+1}{k!} \Bigr) a_{i+1} + \frac{m+1}{(i+1)!}\\
&=& \Bigl( -\frac{i-1}{2} + \bigl(\frac{1}{2}-\frac{1}{(i+1)!}\bigr) (m+1) \Bigr) a_{i+1} + \frac{m+1}{(i+1)!}\\
&\ge& \Bigl( -\frac{i-1}{2} + \bigl(\frac{1}{2}-\frac{1}{(i+1)!}\bigr) (m+1) \Bigr) + \frac{m+1}{(i+1)!}\\
&=& \frac{m-i+2}{2} \ \ge \ \frac{3}{2},
\end{eqnarray*}
where
$$-\frac{i-1}{2} + \bigl(\frac{1}{2}-\frac{1}{(i+1)!}\bigr) (m+1) > -\frac{i-1}{2} + \bigl(\frac{1}{2}-\frac{1}{(i+1)!}\bigr) (i+1) = 1-\frac{1}{i!} > 0.$$
This implies (ii) because ${\rm dim}H_{i+1} < {\rm dim}H_i$.
\end{proof}

Now, (1) follows from Claims \ref{Claim1} and \ref{Claim2}.
Moreover, any chain of length $m$
$$X \vdash H_1 \vdash H_2 \vdash H_3 \vdash \cdots \vdash H_m$$
satisfies $a_2 = a_3 = \cdots =a_m=1$.
This implies (2) by applying Proposition \ref{Prop1}.
\end{proof}

\begin{thm}\label{Thm5}
Let $X$ be a Fano manifold and $m$ a positive integer.
Assume that $X$ is covered by rational curves of degree $1$ with respect to some ample line bundle $L$ on $X$ and
$${\rm ch}_{k}(X) \ge \frac{2m+1-2^k}{k!} c_1(L)^k$$
for every $1 \le k \le m$.
Then the following statements hold:
\begin{enumerate}
\setlength{\itemsep}{3pt}
\item $\overline{N}_X \ge m$.
Moreover, if every minimal family parametrizes rational curves of degree $1$ with respect to $L$, then $\underline{N}_X \ge m$.
\item $X$ is covered by rational varieties of dimension $m$ and by projective spaces of dimension $m-1$.
Moreover, under the stronger assumption that
$${\rm ch}_{k}(X) \ge \frac{2m+2-2^k}{k!} c_1(L)^k$$
for every $1 \le k \le m$, $X$ is covered by projective spaces of dimension $m$.
\end{enumerate}
\end{thm}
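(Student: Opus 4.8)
The plan is to rerun the proof of Theorem~\ref{Thm4}, now carrying the ample line bundle $L$ along and using the telescoping identities of Proposition~\ref{Prop3} so that the powers $2^{k}$ occurring in the hypothesis cancel against the coefficients $b_{(i,j,k)}$. The statements are trivial for $m=1$, so one may assume $m\ge 2$ and fix a minimal family $H_{1}$ of rational curves of $L$-degree $1$ through a general point; it exists by hypothesis and satisfies $H_{1}^{\circ}=H_{1}$. Since these curves have $L$-degree $1$, Lemma~\ref{Lem1}(1) gives $T(c_{1}(L)^{j})=c_{1}(L_{1})^{j-1}$, and more generally $T^{i}(c_{1}(L)^{j})=c_{1}(L_{i})^{j-i}$ along any chain $X\vdash H_{1}\vdash\cdots\vdash H_{i}$ with $a_{2}=\cdots=a_{i}=1$. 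Together with Lemma~\ref{Lem1}(4) this turns the positivity hypothesis into the inequalities $T^{k}({\rm ch}_{k}(X))\ge\frac{2m+1-2^{k}}{k!}$ for $k\le i$ and $T^{i}({\rm ch}_{i+1}(X))\ge\frac{2m+1-2^{i+1}}{(i+1)!}c_{1}(L_{i})$, which are the only consequences of the hypothesis I would use.

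The core is the following claim, proved by induction on $i$: for every $1\le i\le m-1$ and every chain $X\vdash H_{1}\vdash\cdots\vdash H_{i}$ in which $H_{1}$ parametrizes $L$-degree-$1$ curves, the manifolds $H_{1},\dots,H_{i}$ are Fano and $a_{2}=\cdots=a_{i}=1$. For $i=1$ this is the analogue of Claim~\ref{Claim1}: from $c_{1}(X)\ge(2m-1)c_{1}(L)$ one gets $\dim H_{1}=T(c_{1}(X))-2\ge 2m-3>0$, and from ${\rm ch}_{2}(X)\ge\frac{2m-3}{2}c_{1}(L)^{2}$ one gets, via Proposition~\ref{Prop2}, $c_{1}(H_{1})=\frac{\dim H_{1}}{2}c_{1}(L_{1})+T({\rm ch}_{2}(X))\ge\frac{\dim H_{1}+2m-3}{2}c_{1}(L_{1})>0$. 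For the inductive step, assume the claim for some $i$ with $i+1\le m-1$ and take a chain $X\vdash H_{1}\vdash\cdots\vdash H_{i}\vdash H_{i+1}$, so that $a_{2}=\cdots=a_{i}=1$, the $H_{j}$ are Fano, and Proposition~\ref{Prop2} applies to $H_{i-1}$ and $H_{i}$. First I would force $a_{i+1}=1$: writing $c_{1}(H_{i})=\frac{\dim H_{i}}{2}c_{1}(L_{i})+T({\rm ch}_{2}(H_{i-1}))$ and applying $T$ once more gives $\dim H_{i+1}=\frac{\dim H_{i}}{2}a_{i+1}+T^{2}({\rm ch}_{2}(H_{i-1}))-2$; expanding ${\rm ch}_{2}(H_{i-1})$ by Proposition~\ref{Prop2}, substituting the inequalities above, and using Proposition~\ref{Prop3}(2) (so that $b_{(i-1,2,i+1)}=1$ and the $2^{k}$-terms cancel) yields $T^{2}({\rm ch}_{2}(H_{i-1}))\ge\frac{2(m-i)-1}{2}a_{i+1}$, hence $\dim H_{i+1}\ge\frac{(\dim H_{i}+2(m-i)-1)a_{i+1}}{2}-2$. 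If $a_{i+1}\ge 2$ this gives $\dim H_{i+1}\ge\dim H_{i}+2(m-i)-3\ge\dim H_{i}+1$ because $i\le m-2$, contradicting $\dim H_{i+1}<\dim H_{i}$; so $a_{i+1}=1$.

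Once $a_{2}=\cdots=a_{i+1}=1$ is known, Proposition~\ref{Prop2} applied to $H_{i+1}$, together with Proposition~\ref{Prop3}(1) and the bounds on ${\rm ch}_{k}(X)$ for $k\le i+2\le m$, gives $\dim H_{i+1}\ge 2(m-i-1)-1>0$ and $c_{1}(H_{i+1})\ge(2(m-i-1)-1)c_{1}(L_{i+1})>0$, so $H_{i+1}$ is Fano; this closes the induction. Part (1) now follows: any chain extending a fixed $L$-degree-$1$ family $H_{1}$ has $H_{1},\dots,H_{m-1}$ Fano, whence $\overline{N}_{X}\ge m$, and if every minimal family of $X$ has $L$-degree $1$ the claim applies to all chains, so every $i$-th order minimal family with $i<m$ is Fano, i.e.\ $\underline{N}_{X}\ge m$. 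For part (2) one extends such a chain to length $m$ (possible since $H_{m-1}$ is Fano), invokes Proposition~\ref{Prop1}(2) to obtain a covering by rational varieties of dimension $m$, and invokes Proposition~\ref{Prop1}(3) on the length-$(m-1)$ chain to obtain a covering by projective spaces of dimension $m-1$.

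Under the stronger hypothesis ${\rm ch}_{k}(X)\ge\frac{2m+2-2^{k}}{k!}c_{1}(L)^{k}$ the very same computations, with $2m+1$ replaced by $2m+2$, upgrade the key estimate to $T^{2}({\rm ch}_{2}(H_{i-1}))\ge(m-i)a_{i+1}$, so that $a_{i+1}\ge 2$ would force $\dim H_{i+1}\ge\dim H_{i}$ for all $i\le m-1$; hence $a_{2}=\cdots=a_{m}=1$ and Proposition~\ref{Prop1}(3) gives a covering by projective spaces of dimension $m$. The step I expect to be the main obstacle is the passage forcing $a_{i+1}=1$: it is essential to express $\dim H_{i+1}$ through $\dim H_{i}$ rather than to bound the two dimensions independently, and the resulting inequality is sharp, succeeding for $i\le m-2$ under the weaker hypothesis and for $i\le m-1$ under the stronger one, which is precisely why the value $a_{m}$ is controlled only in the latter case. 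The rest — keeping track of the iterated maps $T$ and verifying the cancellations through Proposition~\ref{Prop3} — is routine bookkeeping.
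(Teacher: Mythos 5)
Your proposal is correct and follows essentially the same route as the paper: the same inductive claims that each $H_i$ is Fano and $a_{i+1}=1$, the same use of Propositions \ref{Prop2} and \ref{Prop3} to make the $2^k$ terms telescope against the coefficients $b_{(i,j,k)}$, the same sharp estimate $T^2\bigl({\rm ch}_2(H_{i-1})\bigr)\ge\frac{2(m-i)-1}{2}a_{i+1}$ forcing $a_{i+1}=1$ only for $i+1<m$ (upgraded to $(m-i)a_{i+1}$ under the stronger hypothesis), and the same final appeal to Proposition \ref{Prop1}. The only difference is cosmetic ordering within the induction, so there is nothing substantive to add.
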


\begin{proof}
We use Notation \ref{Ntt1}.
The result is clear if $m=1$, so we assume $m \ge 2$.

\begin{claim}\label{Claim3}
For any minimal family $X \vdash H_1$, the following statements hold:
\begin{enumerate}
\setlength{\itemsep}{3pt}
\item[{\rm (i)}] $H_1$ is also a Fano manifold.
\item[{\rm (ii)}] If $2<m$, then $a_2=1$ for any second order minimal family $H_1 \vdash H_2$.
\end{enumerate}
\end{claim}

\begin{proof}
(i) follows immediately from Lemma \ref{Lem1} and Proposition \ref{Prop2}:
$${\rm dim}{H_1} = T\bigl(c_1(X)\bigr)-2 \ge 2m-3 > 0$$
and
$$c_1(H_1) = \frac{{\rm dim}{H_1}}{2} c_1(L_1)+T\bigl({\rm ch}_2(X)\bigr) >0.$$
Moreover, Lemma \ref{Lem1} yields
$${\rm dim}H_2 = T\bigl(c_1(H_1)\bigr)-2 = \frac{{\rm dim}H_1}{2} a_2 + T^2\bigl({\rm ch}_2(X)\bigr)-2$$
and
$$T^2\bigl({\rm ch}_2(X)\bigr) \ge \frac{2m-3}{2} a_2 > a_2$$
if $2<m$.
This implies (ii) because ${\rm dim}H_2 < {\rm dim}H_1$.
\end{proof}

\begin{claim}\label{Claim4}
Let $i$ be a positive integer satisfying $2 \le i < m$.
We assume that $X$ admits an $i$-th order minimal family
$$X \vdash H_1 \vdash H_2 \vdash H_3 \vdash \cdots \vdash H_i.$$ 
We also assume that $H_1$ parametrizes rational curves of degree $1$ with respect to $L$ and $a_2 = a_3 = \cdots = a_i = 1$.
Then the following statements hold:
\begin{enumerate}
\setlength{\itemsep}{3pt}
\item[{\rm (i)}] $H_i$ is also a Fano manifold.
\item[{\rm (ii)}] If $i+1<m$, then $a_{i+1}=1$ for any $(i+1)$-th order minimal family $H_i \vdash H_{i+1}$.
\end{enumerate}
\end{claim}

\begin{proof}
We obtain (i) by using Lemma \ref{Lem1}, Propositions \ref{Prop2}, and \ref{Prop3}(1) as follows:
\begin{eqnarray*}
{\rm dim}{H_i} &=& T\bigl(c_1(H_{i-1})\bigr)-2\\
&=& T \Bigl( -(i-1)c_1(L_{i-1}) + \sum_{k=1}^{i-1} b_{(i-1,1,k)} T^k\bigl({\rm ch}_k(X)\bigr)c_1(L_{i-1})\\
& & + b_{(i-1,1,i)} T^{i-1}\bigl({\rm ch}_i(X)\bigr) \Bigr) -2\\
&=& -(i-1) + \sum_{k=1}^{i} b_{(i-1,1,k)} T^k\bigl({\rm ch}_k(X)\bigr) -2\\
&\ge& -(i-1) + \sum_{k=1}^{i} b_{(i-1,1,k)} \frac{2m+1-2^k}{k!} -2\\
&=& -(i-1) + (2m+1) - (i+1) -2 >0
\end{eqnarray*}
and
\begin{eqnarray*}
c_1(H_i) &=& -ic_1(L_i) + \sum_{k=1}^i b_{(i,1,k)} T^k\bigl({\rm ch}_k(X)\bigr)c_1(L_i) + b_{(i,1,i+1)} T^i\bigl({\rm ch}_{i+1}(X)\bigr)\\
&\ge& \Bigl( -i + \sum_{k=1}^{i+1} b_{(i,1,k)} \frac{2m+1-2^k}{k!} \Bigr)c_1(L_i)\\
&=& \Bigl( -i + (2m+1) - (i+2) \Bigr)c_1(L_i) > 0.
\end{eqnarray*}
Moreover, Lemma \ref{Lem1}, Propositions \ref{Prop2}, and \ref{Prop3}(2) yield
$${\rm dim}{H_{i+1}} = T\bigl(c_1(H_i)\bigr)-2 = \frac{{\rm dim}H_i}{2}a_{i+1} + T^2\bigl({\rm ch}_2(H_{i-1})\bigr)-2$$
and
\begin{eqnarray*}
T^2\bigl({\rm ch}_2(H_{i-1})\bigr) &=& T^2 \Bigl( -\frac{i-1}{2}c_1(L_{i-1})^2 + \sum_{k=1}^{i-1} b_{(i-1,2,k)} T^k\bigl({\rm ch}_k(X)\bigr)c_1(L_{i-1})^2\\
& & + b_{(i-1,2,i)} T^{i-1}\bigl({\rm ch}_i(X)\bigr) \cdot c_1(L_{i-1}) + b_{(i-1,2,i+1)} T^{i-1}\bigl({\rm ch}_{i+1}(X)\bigr) \Bigr)\\
&=& -\frac{i-1}{2} a_{i+1} +  \sum_{k=1}^i b_{(i-1,2,k)} T^k\bigl({\rm ch}_k(X)\bigr) a_{i+1}\\
& & + b_{(i-1,2,i+1)} T^{i+1}\bigl({\rm ch}_{i+1}(X)\bigr)\\
&\ge& \Bigl( -\frac{i-1}{2} +  \sum_{k=1}^{i+1} b_{(i-1,2,k)} \frac{2m+1-2^k}{k!} \Bigr) a_{i+1}\\
&=& \Bigl( -\frac{i-1}{2} + \frac{1}{2} (2m+1) - \frac{i+3}{2} \Bigr) a_{i+1} > a_{i+1}
\end{eqnarray*}
if $i+1<m$.
This implies (ii) because ${\rm dim}H_{i+1} < {\rm dim}H_i$.
\end{proof}

Now, (1) follows from Claims \ref{Claim3} and \ref{Claim4}.
Moreover, we can take a chain of length $m$
$$X \vdash H_1 \vdash H_2 \vdash H_3 \vdash \cdots \vdash H_m$$
such that $H_1$ parametrizes rational curves of degree $1$ with respect to $L$ and $a_2 = a_3 = \cdots =a_{m-1}=1$.
This implies the former statement of (2) by applying Proposition \ref{Prop1}.

Finally, we consider the case that the Chern characters of $X$ satisfy the assumption of the latter statement of (2).
Then we can remove the assumption $2<m$ from Claim \ref{Claim3}(ii) because
$$T^2\bigl({\rm ch}_2(X)\bigr) \ge (m-1) a_2 \ge a_2,$$
and we can also remove the assumption $i+1<m$ from Claim \ref{Claim4}(ii) because
$$T^2\bigl({\rm ch}_2(H_{i-1})\bigr) \ge \Bigl( -\frac{i-1}{2} + \frac{1}{2} (2m+2) - \frac{i+3}{2} \Bigr) a_{i+1} \ge a_{i+1}.$$
Thus we obtain $a_m=1$.
This implies that $X$ is covered by projective spaces of dimension $m$ by applying Proposition \ref{Prop1}.
\end{proof}

\end{document}